\documentclass[12pt]{amsart}
\pdfoutput=1
\usepackage{fullpage}
\usepackage{hyperref}
\usepackage{amssymb}
\usepackage{amstext}
\usepackage{amsmath}
\usepackage{amsthm}
\usepackage{stmaryrd}
\usepackage{verbatim}
\usepackage{enumerate}
\usepackage{mathtools}
\usepackage{booktabs, array}

\newtheorem{theorem}{Theorem}

\newtheorem{proposition}[theorem]{Proposition}
\newtheorem{lemma}[theorem]{Lemma}
\theoremstyle{remark}

\newcommand{\SL}{\operatorname{SL}}

\newcommand{\D}{\operatorname{D}}

\begin{document}
\title{Eichler-Selberg relations for the third-order mock theta functions}
\author{Wei Wang}
\address{Department of Mathematics, Shaoxing University, Shaoxing 312000, China}
\email{weiwang\_math@163.com}

\begin{abstract}
Ramanujan's third-order mock theta function $f(q)$ admits the well-known Appell-Lerch series representation:
\[
\sum_{n=0}^{\infty}\frac{q^{n^2}}{(-q;q)_n^2}=\frac{2}{(q;q)_{\infty}}\sum_{n=-\infty}^{\infty}\frac{(-1)^nq^{\frac{3}{2}n^2+\frac{1}{2}n}}{1+q^n}.
\]
In this paper, we establish a natural generalization of this classical identity by utilizing the theory of harmonic Maass forms. Furthermore, we prove analogous Eichler-Selberg type relations for another third-order mock theta function $\omega(q)$. The method presented in this paper can be extended to study other classes of mock theta functions.
\end{abstract}
\keywords{Mock theta function, Holomorphic projection, Appell-Lerch series}
\subjclass[2020]{11B65, 11F11, 11F37}
\thanks{}
\maketitle
\section{Statement of the results}
In his famous letter to Hardy, Ramanujan introduced 17  new series that  he
called mock theta functions, and he grouped these series by order. 
Since then, mock theta functions have attracted considerable interest, see for example \cite{zbMATH03952860,zbMATH05206073,zbMATH01336943,zbMATH02526376}. 
Furthermore, since Zwegers' systematic investigation of the modularity of mock theta functions in his doctoral thesis \cite{zbMATH05784497}, these series can be understood within the framework of harmonic Maass forms, leading to the proof of the well-known conjectures regarding mock theta functions (e.g. \cite{zbMATH05051175,zbMATH05374823}). A comprehensive survey of this topic can be found in \cite{zbMATH06828732}.

An example of a third-order mock theta function is given by
 \begin{equation}\label{equation_mocktheta_f}
 f(q)=\sum_{n=0}^{\infty}c_f(n)q^n:=\sum_{n=0}^{\infty}\frac{q^{n^2}}{(-q;q)_n^2}.
 \end{equation}
Throughout this paper we use the standard notation
\[
(z;q)_n:=\left\{\begin{aligned} 
&\prod_{j=0}^{n-1}(1-zq^{j})~~\text{   if }n\in\mathbb{N},\\
&\prod_{j=0}^{\infty}(1-zq^{j})~~\text{   if }n=\infty,
\end{aligned}
\right.
\]
where $q:=e^{2\pi i\tau}$ for $\tau \in \mathbb{H}$ the upper half plane. Watson \cite{zbMATH02526376} studied the behaviour of the third order functions under the modular group, and he found the following Appell-Lerch series representation of $f(q)$:
\begin{equation}\label{equation_Watson}
f(q)=\frac{2}{(q;q)_{\infty}}\sum_{n=-\infty}^{\infty}\frac{(-1)^nq^{\frac{3}{2}n^2+\frac{1}{2}n}}{1+q^n}.
\end{equation}
By Euler's pentagonal number theorem
\[
(q;q)_{\infty}=\sum_{k\in\mathbb{Z}}(-1)^kq^{\frac{k(3k+1)}{2}},
\]
Watson's identity (\ref{equation_Watson}) can be written as
\begin{equation}\label{equation_Watson2}
\sum_{n\geq 0}\sum_{k\in\mathbb{Z}}(-1)^k c_f\left(n-\frac{k(3k+1)}{2}\right)q^n=2\sum_{n=-\infty}^{\infty}\frac{(-1)^nq^{\frac{3}{2}n^2+\frac{1}{2}n}}{1+q^n}.
\end{equation}
where we define $c_f(n)=0$ for $n<0$. The aim of this paper is to provide a natural generalization of (\ref{equation_Watson2}). To state our results, we first introduce some notation. Let $T_n(x)$ denote the Chebyshev polynomials of the first kind, which are defined by the following recurrence relation:
\begin{align*}
&T_0(x)=1,~T_1(x)=x,\\
&T_{n+1}(x)=2xT_n(x)-T_{n-1}(x) \text{ for }n\geq 1.
\end{align*}
They are given by the explicit formula:
\begin{equation}\label{equation_che}
T_n(x)=\sum_{j=0}^{\lfloor \frac{n}{2}\rfloor}\binom{n}{2j}(x^2-1)^jx^{n-2j}.
\end{equation}
Let $E_{n}$ denote the Euler numbers, which are defined by the following generating function:
\[
\frac{2}{e^{t}+e^{-t}}=\sum_{n\geq 0}E_n\frac{t^n}{n!}.
\]
The following is one of the main results of this paper.
\begin{theorem}\label{theorem_main}
For every $v\geq 1$, the $q$-series
\begin{equation}\label{equation_maintheorem}
\begin{aligned}
&6^v\cdot\sum_{n\geq 1}\sum_{k\in\mathbb{Z}}n^{v}T_{2v}\left(\frac{6k+1}{\sqrt{24n}}\right)(-1)^kc_f\left(n-\frac{k(3k+1)}{2}\right)q^n\\
&-2\sum_{n\in\mathbb{Z}}\left(\frac{12}{2n+1}\right)\frac{n^{2v}q^{\frac{n(n+1)}{6}}}{1+q^n}\\
&+\frac{2}{E_{2v}}\sum_{n\geq 1}\frac{(-1)^{n-1}(2n-1)^{2v}q^{2n-1}}{1+q^{2n-1}}-\frac{2^{2v+1}}{E_{2v}}\sum_{n\geq 1}\frac{(-1)^{n-1}n^{2v}q^{n}}{1+q^{2n}}
\end{aligned}
\end{equation}
is a cusp form in $S_{2v+1}(\Gamma_0(8),\left(\tfrac{-1}{\cdot}\right))$. In particular, when $v=1$ or $2$, this cusp form vanishes.
\end{theorem}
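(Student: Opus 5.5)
The plan is to realize the first line of (\ref{equation_maintheorem}) as the holomorphic part of a Rankin--Cohen bracket of two weight-$\frac12$ objects, and then identify the remaining terms by holomorphic projection. Zwegers' completion of $q^{-1/24}f(q)$ gives a vector-valued harmonic Maass form $F$ of weight $\frac12$. Its non-holomorphic part is a period integral of a unary theta function of weight $\frac32$, essentially $\sum_n \left(\frac{-12}{n}\right)n\,q^{n^2/24}$ twisted by characters. The companion object is $\eta(\tau)=\sum_k (-1)^k q^{(6k+1)^2/24}$, which is a weight-$\frac12$ modular form with character $\left(\frac{12}{\cdot}\right)$. Since $f$ is attached to $\eta$ by multiplier, the product $F\eta$ transforms with trivial-type multiplier of weight $1$ on a suitable congruence subgroup. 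After rescaling $\tau\mapsto 24\tau$ we expect the group to be $\Gamma_0(8)$ with character $\left(\frac{-1}{\cdot}\right)$, once the $q^{-1/24}$ and $q^{1/24}$ shifts cancel.

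The key step is to take the $v$-th Rankin--Cohen bracket $[F,\eta]_v$, which has weight $2v+1$. For two weight-$\frac12$ forms the bracket coefficients combine into a Gegenbauer polynomial $C_v^{(-1/2)}$-type expression. For exponents $a=n-(6k+1)^2/24$ and $b=(6k+1)^2/24$, this homogeneous expression in $a,b$ equals, up to a constant, $n^vT_{2v}\!\left(\frac{6k+1}{\sqrt{24n}}\right)$. I would verify this by comparing generating functions: the identity $T_{2v}(\cos\theta)=\cos 2v\theta$ together with the known closed form of the Cohen bracket kernel for weights $\frac12,\frac12$ gives it, and it accounts for the factor $6^v$. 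This turns the holomorphic part of $[F^+,\eta]_v$ into the first line of (\ref{equation_maintheorem}).

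Next, I would apply Sturm's holomorphic projection, in the form for brackets of mock modular forms (as in Mertens' work on Eichler--Selberg relations). The result is that $\pi_{hol}([F,\eta]_v)$ equals $[F^+,\eta]_v$ plus an explicit correction. This correction comes from pairing the shadow theta series with $\eta$, and it is an indefinite-theta/Appell-type sum. Computing it explicitly should produce the Appell--Lerch sum $\sum_n\left(\frac{12}{2n+1}\right)\frac{n^{2v}q^{n(n+1)/6}}{1+q^n}$. For $v=0$ this is exactly Watson's identity (\ref{equation_Watson2}), which serves as a consistency check on the normalization. This step, evaluating the holomorphic projection of the non-holomorphic piece as the explicit $n^{2v}$-weighted Appell sum, is where I expect the main obstacle. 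It requires writing the regularized Petersson integral over a two-dimensional lattice sum, splitting by the sign of the indefinite quadratic form, and summing the resulting geometric series in closed form with exact constants.

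Finally, $\pi_{hol}([F,\eta]_v)$ is a holomorphic modular form of weight $2v+1$ on $\Gamma_0(8)$ with character $\left(\frac{-1}{\cdot}\right)$, but it need not be cuspidal. To reach a cusp form, I would subtract the appropriate combination of the two Eisenstein series in $M_{2v+1}(\Gamma_0(8),\left(\frac{-1}{\cdot}\right))$. These are given by the last line of (\ref{equation_maintheorem}), whose constant terms involve $E_{2v}$. The coefficients $\frac{2}{E_{2v}}$ and $-\frac{2^{2v+1}}{E_{2v}}$ are fixed by matching the constant terms at the cusps $\infty$, $0$, and $1/2$ (and $1/4$) of $\Gamma_0(8)$. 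The constant terms of the bracket at these cusps come from the $n^{2v}$ Appell term at $n=0$ and its images. For $v=1,2$ the space $S_{3}$ or $S_{5}$ of $\Gamma_0(8)$ with this character is zero by the dimension formula, so the cusp form vanishes; this gives the final claim.
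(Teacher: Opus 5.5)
Your architecture matches the paper's. You use Zwegers' completion $H_1$ of $q^{-1/24}f(q)$ and the bracket $[H_1,\eta]_v$ on $\Gamma_0(8)$ with character $\chi_{-4}$. You then apply holomorphic projection via Mertens' formula, which yields a Hecke-type double sum that a geometric-series rearrangement turns into the Appell sum. Finally you make an Eisenstein correction fixed by constant terms at the four cusps.

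\textbf{The gap: vanishing for $v=2$.} The space $S_5(\Gamma_0(8),\chi_{-4})$ is \emph{not} zero. The Cohen--Oesterl\'e formula (index $12$, no elliptic points, local factor $\lambda=2\cdot 2^{3-2}=4$ at $p=2$) gives $\dim S_k(\Gamma_0(8),\chi_{-4})=k-3$ for odd $k\ge 3$. Concretely, with $g=\eta(\tau)^4\eta(2\tau)^2\eta(4\tau)^4\in S_5(\Gamma_0(4),\chi_{-4})$, the forms $g(\tau)$ and $g(2\tau)$ span this space. So your dimension argument proves vanishing only for $v=1$, where $S_3=0$; that case is actually cleaner than the paper's coefficient check. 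For $v=2$ the vanishing is a genuine identity. It must be verified, as the paper does, by computing the coefficients of the explicit $q$-series up to the Sturm bound $5\cdot 12/12=5$.

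\textbf{The Eisenstein step as you describe it would fail.} The constant terms do not come from the $n=0$ Appell term: that term equals $0^{2v}/2=0$ for $v\ge 1$. Following your mechanism, you would find all cusp values zero and no Eisenstein correction at all. Since $\pi_{\mathrm{hol}}$ commutes with slashing, the value at a cusp is the constant term of $[H_1,\eta]_v$ slashed by the cusp matrix.
\begin{itemize}
\item At $\infty$, it comes from the $q^{-1/24}\cdot q^{1/24}$ pairing in $[H_1^+,\eta]_v$, giving $\frac12\binom{2v}{v}24^{-v}$.
\item At $1/2$ and $1/4$, you get the same value times the scalars $\zeta_4$ and $-1$ by which $\left(\begin{smallmatrix}1&0\\a&1\end{smallmatrix}\right)$ ($a=2,4$) acts on $H_1\eta$.
\item At $0$, $S$ turns $H_1$ into $H_2$, whose holomorphic part starts at $q^{1/3}$, so the value is $0$.
\end{itemize}
Four conditions are needed because the Eisenstein subspace of $M_{2v+1}(\Gamma_0(8),\chi_{-4})$ is four-dimensional, not two-dimensional.

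\textbf{Two smaller points.} First, no rescaling $\tau\mapsto 24\tau$ occurs. The product $q^{-1/24}f(q)\eta(\tau)=f(q)(q;q)_\infty$ already has integral exponents, and rescaling would change the level. Second, you must check that $\Gamma_0(8)$ acts on the single component $H_1$ by a scalar, even though $S$ mixes $H_1$ and $H_2$. The paper does this on generators.
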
 
Our results are in the spirit of the Eichler-Selberg class number formula (see \cite[Chapter 6.3]{zagier1991modular} or \cite[(1.3)]{zbMATH07957369}):
\[
\mathrm{Tr}(n; 2k) = -\frac{1}{2} \sum_{r \in \mathbb{Z}} n^{k-1}U_{2k-2}\left(\frac{r}{2\sqrt{n}}\right)H(4n-r^2) - \frac{1}{2} \sum_{d \mid n} \min(d, n/d)^k,
\]
where $H(d)$ denotes the Hurwitz–Kronecker class numbers and $U_{2k-2}(x)$ denotes the Chebyshev polynomials of the second kind.

The $q$-series in the second line of \eqref{equation_maintheorem} can be regarded as a proper generalization of the right-hand side of \eqref{equation_Watson2}. Indeed, we observe that
\[
\sum_{n\in\mathbb{Z}}\frac{(-1)^nq^{\frac{3}{2}n^2+\frac{1}{2}n}}{1+q^n}=\sum_{n\in\mathbb{Z}}\left(\frac{12}{2n+1}\right)\frac{q^{\frac{n(n+1)}{6}}}{1+q^n}.
\]
For a proof of this identity, see Lemma \ref{lemma_q0}. The $q$-series in the third line is essentially the Lambert series representation of the Eisenstein series on $\Gamma_0(8)$ with the character $\left(\tfrac{-1}{\cdot}\right)$. Remarkably, although our proof assumes $v\geq 1$, since the Eisenstein parts cancel out at weight one, the result remains consistent for $v=0$. Watson's identity \eqref{equation_Watson} can be viewed as a special case of our result at weight one, providing a hint for generalizing the approach to other mock theta functions.

For $v=1$, our result yields the identity
\begin{align*}
&\sum_{n\geq 1}\sum_{k\in\mathbb{Z}}\left((6k+1)^2-12n\right)(-1)^kc_f\left(n-\frac{k(3k+1)}{2}\right)q^n\\
=&4\sum_{n\in\mathbb{Z}}\left(\frac{12}{2n+1}\right)\frac{n^{2}q^{n(n+1)/6}}{1+q^n}
+4\sum_{n\geq 1}\frac{(-1)^{n-1}(2n-1)^{2}q^{2n-1}}{1+q^{2n-1}}-16\sum_{n\geq 1}\frac{(-1)^{n-1}n^{2}q^{n}}{1+q^{2n}},
\end{align*}
and for $v=2$, we get
\begin{align*}
&\sum_{n\geq 1}\sum_{k\in\mathbb{Z}}\left[(6k+1)^4-24n(6k+1)^2+72n^2\right](-1)^kc_f\left(n-\frac{k(3k+1)}{2}\right)q^n\\
=&4\sum_{n\in\mathbb{Z}}\left(\frac{12}{2n+1}\right)\frac{n^{4}q^{n(n+1)/6}}{1+q^n}
-\frac{4}{5}\sum_{n\geq 1}\frac{(-1)^{n-1}(2n-1)^{4}q^{2n-1}}{1+q^{2n-1}}+\frac{64}{5}\sum_{n\geq 1}\frac{(-1)^{n-1}n^{4}q^{n}}{1+q^{2n}}.
\end{align*}
To the best of the author's knowledge, the two identities presented above have not previously appeared in the literature. For $v=3$, the cusp form in Theorem \ref{theorem_main} is non-vanishing. Numerical computations show that this cusp form is given by
\[
-\frac{20736}{61}\eta^4(z)\eta^{10}(2z),
\]
where  
\[
\eta(z)=q^{-1/24}\prod_{n=1}^{\infty}(1-q^n)=q^{-1/24}(q;q)_{\infty}
\]
denotes the Dedekind eta function.

Let
\[
\omega(q):=\sum_{n\geq 0}c_{\omega}(n)q^n=\sum_{n=0}^{\infty}\frac{q^{2n(n+1)}}{(q;q^2)_{n+1}^2}
\]
be another third order mock theta function. We establish the following analogous result.
\begin{theorem}\label{theorem_main2}
For every $v\geq 1$, the $q$-series
\begin{align*}
\sum_{n\geq 0}&\sum_{k\in\mathbb{Z}}(12n+9)^{v}T_{2v}\left(\frac{6k+1}{\sqrt{12n+9}}\right)(-1)^kc_{\omega}(n-k(3k+1))q^{\frac{n}{2}+\frac{3}{8}}\\
&+\sum_{n\in \mathbb{Z}}\left(\frac{12}{n+2}\right)\frac{n^{2v}q^{\frac{n(n+4)}{24}}}{1-q^{\frac{n}{2}}}\\
&-\frac{1}{2E_{2v}}\sum_{n\geq 1}\frac{(2n-1)^{2v}q^{\frac{n}{4}-\frac{1}{8}}}{1+q^{\frac{n}{2}-\frac{1}{4}}}+\frac{1}{2E_{2v}}\sum_{n\geq 1}\frac{(-1)^{n-1}(2n-1)^{2v}q^{\frac{n}{4}-\frac{1}{8}}}{1-q^{\frac{n}{2}-\frac{1}{4}}}
\end{align*}
is a cusp form in $S_{2v+1}(\Gamma^{0}(8),\chi_{-4})$. In particular, when $v=1$ or 2, this cusp form vanishes.
\end{theorem}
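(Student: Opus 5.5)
The plan is to run the argument for Theorem \ref{theorem_main} again, now using Zwegers' completion of $\omega(q)$ in place of that of $f(q)$. First I would express $\omega$ through a vector-valued harmonic Maass form of weight $1/2$. Concretely, $q^{2/3}\omega(q^{1/2})$, together with $f(q)$ and its companions, forms such an object whose shadow is a weight $3/2$ unary theta series $\sum (6k+1)q^{(6k+1)^2/24}$ with suitable characters. I will then pair the completed $\widehat\omega$ with the weight $1/2$ theta function $\eta$, which corresponds to the factor $\sum(-1)^kq^{k(3k+1)}$ after $q\mapsto q^2$. The pairing is via the Rankin--Cohen bracket $[\widehat\omega,\eta(\cdot)]_{v}$, so that the product has weight $2v+1$. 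The level and character are fixed by the transformation laws of both factors. After the variable change $q\mapsto q^{1/2}$ and multiplication by $q^{3/8}$, the combined exponent $n+3/4$ becomes $(12n+9)/24$. This explains why the group is $\Gamma^0(8)$ rather than $\Gamma_0(8)$ and why the character is $\chi_{-4}$.

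The holomorphic part of the Rankin--Cohen bracket is a Cohen-type convolution whose coefficients are homogeneous polynomials in $6k+1$ and $\sqrt{12n+9}$. The standard generating identity for Rankin--Cohen brackets of theta-type forms turns this polynomial into $(12n+9)^vT_{2v}\bigl((6k+1)/\sqrt{12n+9}\bigr)$, up to a constant. This is exactly the first line of the statement. Next I apply holomorphic projection (Sturm's operator, via the Mertens / Imamoglu--Raum--Richter formula) to the completed bracket. The nonholomorphic part is a product of theta functions with an incomplete gamma factor, and its holomorphic projection produces a sum over pairs $(6k+1,6\ell+1)$ weighted by $(|a|-|b|)^{2v}$-type terms. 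The resulting sum must then be recognised as the Appell--Lerch-type series $\sum_n\left(\frac{12}{n+2}\right)n^{2v}q^{n(n+4)/24}/(1-q^{n/2})$. For this I expand the geometric series and reparametrise the pairs $(a,b)$ by $n=(a-b)/\ldots$, using the analogue of Lemma \ref{lemma_q0} for the sign-character rewriting.

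The holomorphic projection is a genuine weight $2v+1$ modular form on $\Gamma^0(8)$ with character $\chi_{-4}$. The Eisenstein series for this level and character are spanned by the two Lambert series in the last line, which are the images of the $\Gamma_0(8)$ Eisenstein series from Theorem \ref{theorem_main} under $z\mapsto z/8$ type conjugations. To show the remainder is cuspidal, I compute the constant terms of the projected form at the cusps of $\Gamma^0(8)$ (essentially at $\infty$ and $0$ for this character). The coefficients $\pm 1/(2E_{2v})$ arise because the constant term at the relevant cusp is proportional to $E_{2v}$, which is $L(-2v,\chi_{-4})$ up to normalisation. The vanishing claims for $v=1,2$ follow because $S_3$ and $S_5$ of $\Gamma^0(8)$ with character $\chi_{-4}$ are zero. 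This is seen by conjugating to $\Gamma_0(8)$, where $\dim S_3(\Gamma_0(8),\chi_{-4})=\dim S_5=0$, as in Theorem \ref{theorem_main}.

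I expect the main obstacle to be the holomorphic projection step. It requires an exact evaluation of the regularised Petersson-type integrals, which involve Beta/incomplete gamma sums giving the $(a-b)^{2v}$ weights. Convergence needs a regularisation, since the weight is only $2v+1\geq3$ and the bracket has growth at cusps. The terms with small $|n|$, where $1-q^{n/2}$ interacts with boundary contributions and with $n=0$, must be matched exactly. I would first verify these steps numerically at $v=1$ and use that check to fix the normalising constants.
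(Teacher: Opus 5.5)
There are genuine gaps in how you determine the Eisenstein part and in your $v=2$ claim. Your overall framework (Rankin--Cohen bracket of the completed $\omega$-component with $\eta$, holomorphic projection via Theorem~\ref{theorem_holoproj}, geometric-series reparametrisation as in Lemma~\ref{lemma_Hecke-omega}) is the paper's.

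\textbf{The Eisenstein part.} The Eisenstein subspace of $M_{2v+1}(\Gamma^0(8),\chi_{-4})$ is four-dimensional; it is not spanned by the two Lambert series of the last line.
\begin{itemize}
\item $\Gamma^0(8)=S^{-1}\Gamma_0(8)S$ has genus $0$, no elliptic points, and four cusps $\infty,0,-2,-4$. These are the $S^{-1}$-images of $0,\infty,\tfrac12,\tfrac14$.
\item All four cusps are regular for $\chi_{-4}$ in odd weight. The trace-$2$ generators of the cusp stabilisers of $\Gamma_0(8)$ all have lower-right entry $\equiv 1\pmod 4$.
\item Each Lambert series in the last line is itself a combination of two of $E_{2v+1}^{\chi_{-4},1}(\tau/4)$, $E_{2v+1}^{\chi_{-4},1}(\tau/8)$, $E_{2v+1}^{1,\chi_{-4}}(\tau/4)$, $E_{2v+1}^{1,\chi_{-4}}(\tau/8)$ (Lemma~\ref{lemma_Eis}). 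That only these two combinations occur is an output of the cusp data, not a structural fact.
\end{itemize}
Matching constant terms only at $\infty$ and $0$ therefore leaves a two-dimensional ambiguity. In fact the constant term at $\infty$ is $0$, while those at $-2$ and $-4$ are nonzero: up to a nonzero constant they are the values $\tfrac{\zeta_4}{2}$ and $-\tfrac12$ of Table~\ref{tab:eisenstein_cusps2}. So your check cannot prove cuspidality.

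Computing all four constant terms directly would work, but the paper avoids it. Since $H_1|_{1/2}S=\zeta_8^{-1}H_2$ and $\eta|_{1/2}S=\zeta_8^{-1}\eta$, \eqref{equation_Rankin-Cohen} and Proposition~\ref{prop_holoproj-trans} give
$-\zeta_4\,\pi_{\mathrm{hol}}([H_2,\eta]_v)=\pi_{\mathrm{hol}}([H_1,\eta]_v)|_{2v+1}S$.
This yields the group $\Gamma^0(8)$ and the character at once. The Eisenstein part is then the $S$-image, via Proposition~\ref{proposition_Fricke-invo}, of \eqref{equation_Eispart}, which Theorem~\ref{theorem_main} already fixed using all four cusps.

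\textbf{The case $v=2$.} Your vanishing argument is wrong here. With genus $0$ and four regular cusps, $\dim S_k(\Gamma_0(8),\chi_{-4})=k-3$ for odd $k\ge 3$, and likewise for $\Gamma^0(8)$. So $S_3=0$, but $\dim S_5=2$: it contains $\eta(\tau)^4\eta(2\tau)^2\eta(4\tau)^4$ and its image under $\tau\mapsto 2\tau$. The case $v=2$ needs either a Sturm-bound coefficient check, or transfer from Theorem~\ref{theorem_main} through the $S$-relation above. The latter works because the cusp form here is a constant multiple of the $S$-image of the one there.

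\textbf{The shadow.} You have misidentified it. The $\omega$-component is $H_2$, built from $2q^{1/3}\omega(q^{1/2})$, not $q^{2/3}\omega(q^{1/2})$. Its shadow comes from $g_0$, with exponents $\tfrac32(n+\tfrac13)^2=\tfrac{(6n+2)^2}{24}$. The exponents $\tfrac{(6k+1)^2}{24}$ belong to $g_1$, the shadow for $f$.

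Consequently the projected non-holomorphic term runs over pairs $(m,2n)$ with $m>2n>0$. It is weighted by $\bigl(\tfrac{12}{m}\bigr)\bigl(\tfrac{n}{3}\bigr)(-1)^{n-1}(m-2n)^{2v}$ with exponent $q^{(m^2-4n^2)/24}$, which is what Lemma~\ref{lemma_Hecke-omega} converts. With your pairs $(6k+1,6\ell+1)$ you would be led back to the Hecke series of Lemma~\ref{lemma_Hecke-f}, which does not match the Appell--Lerch series in the statement.
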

This can be seen as a generalization of the classical identity given by Watson \cite{zbMATH02526376}:
\[
\omega(q)=\frac{1}{(q^2;q^2)_{\infty}}\sum_{n\in\mathbb{Z}}(-1)^nq^{3n(n+1)}\frac{1+q^{2n+1}}{1-q^{2n+1}}.
\]
The second line of the $q$-series in Theorem \ref{theorem_main2} also serves as the proper generalization of the summation on the right-hand side of the identity above, see Lemma \ref{lemma_q1}.

The primary strategy of this paper is outlined below. This approach is also applicable to a wide class of other mock theta functions. First, we identify the non-holomorphic part and the modular transformation properties of the given mock theta function. Second, we interpret the Appell-Lerch or Hecke-type representation (see \cite{CHEN2020107037}) of a mock theta function can be seen as the holomorphic projection of a weight 1/2 eta-quotient multiplied by the corresponding harmonic Maass form. Third, we consider the Rankin-Cohen brackets of this eta-quotient and the harmonic Maass form. Its holomorphic projection yields a modular form on a certain congruence subgroup with specific characters. The Eisenstein part is then determined by analyzing its expansions at various cusps. 

\section{Preliminaries}
\subsection{Notations}
In this section, we fix some standard notation. For $\tau\in \mathbb{H}$ the upper half plane, $q:=e^{2\pi i\tau}$. For positive integer $n\in\mathbb{Z}$, let $\zeta_n:=e^{\frac{2\pi i}{n}}$ denote the primitive $n$-th root of unity. Let $\chi$ be a Dirichlet character. For any matrix $\gamma=\begin{pmatrix}
a&b\\
c&d
\end{pmatrix}\in \operatorname{GL}^{+}_2(\mathbb{Z})$, define $\chi(\gamma):=\chi(d)$. Let $k$ be a positive integer. For a function $f(\tau)$ defined on the upper half plane $\mathbb{H}$,  the weight $k$ slash operator is defined by
\[
(f|_k\gamma)(\tau)=(ad-bc)^{k/2}j(\gamma,\tau)^{-k}f\left(\frac{a\tau+b}{c\tau+d}\right),~j(\gamma,\tau):=c\tau+d.
\]
The incomplete gamma function is defined by
\[
\Gamma(t,x):=\int_{x}^{\infty}e^{-z}z^{t-z}dz.
\] 

Let $\Gamma$ be  a subgroup of finite index of $\SL_2(\mathbb{Z})$ and $f,g$ be two modular forms of weight $k$. Then the Petersson scalar product of $f$ and $g$ with respect to the group $\Gamma$ is defined by
\[
\langle f,g\rangle=\frac{1}{[\SL_2(\mathbb{Z}):\Gamma]}\int_{\Gamma\setminus \mathbb{H}}f(\tau)\overline{g(\tau)}y^{k-2}dxdy,~\tau=x+iy,
\]
provided this integral exists.

We recall the Rankin–Cohen brackets, which serve as a generalization of the standard multiplication of modular forms. Let $f$ and $g$ be two smooth functions defined on the upper half-plane and let $k$ and $l$ be two integers or half-integers. The $v$-th Rankin–Cohen bracket for $f$ and $g$ is defined as
\[
[f,g]_{v}:=\sum_{0\leq j\leq v} (-1)^j \binom{k+v-1}{v-j} \binom{l+v-1}{j}\D^{j}f\D^{v-j}g, 
\]
where $\D f:=\frac{1}{2\pi i}\frac{d f}{d\tau}$. The 0-th Rankin–Cohen bracket of two modular forms is the product of these two modular forms. The following result given by Cohen \cite[Theorem 7.1]{MR0382192} shows that Rankin-Cohen bracket, in a sense, is a generalization of the product map:
\begin{equation}\label{equation_Rankin-Cohen}
[f\big|_k\gamma,g\big|_l\gamma]_{v}=[f,g]_{v}\big|_{k+l+2v}\gamma,~\forall \gamma\in\SL_2(\mathbb{Z}).
\end{equation}

\subsection{Mock theta functions}
Zwegers generalized Watson's work and provided new insights into the study of mock theta functions by demonstrating that these series, when augmented by a suitable non-holomorphic completion, yield functions that satisfy modular transformation properties. These functions are now known as harmonic Maass forms and have since been the subject of extensive research. We recall Zwegers' completion of the series (\ref{equation_mocktheta_f}), defining
\begin{align*}
&g_0(\tau):=\sum_{n\in\mathbb{Z}}(-1)^n(n+1/3)q^{\tfrac32(n+\tfrac13)^2},\\
&g_1(\tau):=-\sum_{n\in\mathbb{Z}}(-1)^n(n+1/6)q^{\tfrac32(n+\tfrac16)^2},\\
&g_2(\tau):=\sum_{n\in\mathbb{Z}}(n+1/3)q^{\tfrac32(n+\tfrac13)^2}.
\end{align*}
Set
\[
F(\tau):=\left(q^{-1/24}f(q),2q^{1/3}\omega(q^{1/2}),2q^{1/3}\omega(-q^{1/2})\right)^{T} \text{ and }G(\tau):=\left(g_1,g_0,-g_2\right)^{T}.
\]
Zwegers \cite[Theorem 3.6]{zbMATH01724355} showed that
\[
H(\tau):=F(\tau)-2i\sqrt{3}\int_{-\overline{\tau}}^{i\infty}\frac{G(z)}{(-i(\tau+z))^{\tfrac 12}}dz
\]
is a vector valued real-analytic modular form of weight 1/2, transforming as 
\begin{equation}\label{equation_trans-H}
H(\tau+1)=\begin{pmatrix}
\zeta^{-1}_{24} &0 &0\\
0 &0 &\zeta_3\\
0 &\zeta_3 &0\\
\end{pmatrix}H(\tau),~~
H(-1/\tau)=\zeta_8^{-1}\begin{pmatrix}
0 &1 &0\\
1 &0 &0\\
0 &0 &-1\\
\end{pmatrix}H(\tau)\tau^{\frac{1}{2}}.
\end{equation}

The following result establishes the modular transformation law for the Rankin–Cohen bracket of the Dedekind eta function and the required harmonic Maass form.
\begin{lemma}
We have, for $\gamma\in\Gamma_0(8)$,
\[
[H_1(\tau),\eta(\tau)]_v\big|_{1+2v}\gamma=\left(\frac{-4}{d_{\gamma}}\right)[H_1(\tau),\eta(\tau)]_v,
\]
where $d_{\gamma}$ denotes the lower-right entry of the matrix $\gamma$, and $H_1(z)$ denotes the first component of the vector $H$.
\end{lemma}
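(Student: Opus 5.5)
The plan is to reduce the statement to a transformation law for $H_1$ alone and then transport it through the Rankin--Cohen bracket. The identity \eqref{equation_Rankin-Cohen} only uses the cocycle property of $j(\gamma,\tau)$ and the chain rule. So it holds verbatim for smooth functions of half-integral weight, provided one fixes the principal branch of $j(\gamma,\tau)^{1/2}$ on both sides consistently. Suppose we show that for every $\gamma\in\Gamma_0(8)$
\[
H_1\big|_{1/2}\gamma=\mu(\gamma)H_1 \quad\text{and}\quad \eta\big|_{1/2}\gamma=\nu_\eta(\gamma)\eta
\]
with scalars satisfying $\mu(\gamma)\nu_\eta(\gamma)=\left(\frac{-4}{d_\gamma}\right)$. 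Then bilinearity of $[\cdot,\cdot]_v$ and \eqref{equation_Rankin-Cohen} give
\[
[H_1,\eta]_v\big|_{1+2v}\gamma=[\mu(\gamma)H_1,\nu_\eta(\gamma)\eta]_v=\left(\tfrac{-4}{d_\gamma}\right)[H_1,\eta]_v ,
\]
which is the claim.

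The core step is therefore to analyse the vector-valued transformation law \eqref{equation_trans-H} twisted by the eta multiplier. I will consider the weight-one vector $\eta H$. It transforms under $\SL_2(\mathbb{Z})$ by a representation $\rho$ determined by its values on the generators. Using $\eta(\tau+1)=\zeta_{24}\eta(\tau)$ and $\eta(-1/\tau)=\zeta_8^{-1}\tau^{1/2}\eta(\tau)$ together with \eqref{equation_trans-H}, these are
\[
\rho(T)=\begin{pmatrix}1&0&0\\0&0&\zeta_8\\0&\zeta_8&0\end{pmatrix},\qquad
\rho(S)=-i\begin{pmatrix}0&1&0\\1&0&0\\0&0&-1\end{pmatrix}.
\]
The half-integral branch ambiguities cancel in the product, since the weight is now integral, so $\rho$ is an honest representation of $\SL_2(\mathbb{Z})$. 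It has finite image, and I expect it to factor through $\SL_2(\mathbb{Z}/8\mathbb{Z})$, because all entries are $8$th roots of unity and the standard level for these third-order functions is $8$ or a divisor.

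The main obstacle is then to prove that for $\gamma\in\Gamma_0(8)$ the first row of $\rho(\gamma)$ is $\left(\left(\frac{-4}{d_\gamma}\right),0,0\right)$, i.e.\ that $\Gamma_0(8)$ does not mix $H_1$ with the $\omega$-components. I would do this by writing an explicit finite generating set of $\Gamma_0(8)$: $-I$, $T$, and a few matrices such as $ST^{-8}S^{-1}=\begin{pmatrix}1&0\\8&1\end{pmatrix}$, $\begin{pmatrix}3&-1\\8&-3\end{pmatrix}$, $\begin{pmatrix}5&-2\\8&-3\end{pmatrix}$, each expressed as a word in $S,T$. I would then multiply the corresponding matrices $\rho(S),\rho(T)$ and check the first row of each product.

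Two checks are immediate. For $T$ the first row is $(1,0,0)$, consistent with $d=1$. For $-I=S^2$ we get $\rho(S)^2=-I$, consistent with $\left(\frac{-4}{-1}\right)=-1$. Since $\gamma\mapsto\left(\frac{-4}{d_\gamma}\right)$ is a character of $\Gamma_0(8)$ and the first-row condition is multiplicative on the stabilizer of the line spanned by $e_1$, checking the generators suffices. Alternatively, and as a cross-check, one can note that $\rho(T^2)$ is diagonal and compute $\rho$ on $\Gamma_0(8)$ through its image in the finite group $\rho(\SL_2(\mathbb{Z}))$. Having $\mu\nu_\eta=\left(\frac{-4}{\cdot}\right)$ on generators then yields it on all of $\Gamma_0(8)$, and the first paragraph finishes the proof.
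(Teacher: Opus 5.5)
Your strategy is the paper's. You use the equivariance \eqref{equation_Rankin-Cohen} of the bracket to reduce to scalar multipliers of $H_1$ and $\eta$, then check these on generators of $\Gamma_0(8)$ written as words in $S,T$. Packaging into the weight-one vector $\eta H$ is a sensible refinement: it removes the half-integral branch bookkeeping the paper does by hand, and it supplies the relation $(ST)^3=S^2$ as a consistency check.

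The gap is in the data you feed in. The off-diagonal entries of $\rho(T)$ are $\zeta_{24}\zeta_3=\zeta_{24}^{9}=\zeta_8^3$, not $\zeta_8$. Write $a$ for that entry and $M$ for the permutation matrix in \eqref{equation_trans-H}. Then $(M\rho(T))^3=-a^2I$, so $(\rho(S)\rho(T))^3=-ia^2I$. For your $a=\zeta_8$ this equals $+I$, while $\rho(S)^2=-I$, so your matrices do not define a representation.

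The error would break the proof, not just a side remark:
\begin{itemize}
\item For $\gamma_{1/2}=\begin{pmatrix}5&-2\\8&-3\end{pmatrix}=TST^3ST^3S$, the first row of $\rho(\gamma_{1/2})$ is $-ia^6e_1^T$.
\item For $\gamma_{1/4}=\begin{pmatrix}5&-1\\16&-3\end{pmatrix}=-ST^{-3}ST^5S$, the first row is $ia^2e_1^T$.
\end{itemize}
Both equal $-e_1^T$ when $a=\zeta_8$. With the correct $a=\zeta_8^3$ both equal $+e_1^T=\chi_{-4}(-3)e_1^T$. Only your check for $\begin{pmatrix}1&0\\8&1\end{pmatrix}=ST^{-8}S^{-1}$ is insensitive to the slip, because $\rho(T)^8=I$ in either case.

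Your generating set also needs repair. The matrix $\begin{pmatrix}3&-1\\8&-3\end{pmatrix}$ has determinant $-1$, so it is not in $\Gamma_0(8)$. You also never justify that your list generates.

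The paper uses $-I,T,\gamma_{1/4},\gamma_{1/2}$. These are the side pairings of the Farey polygon with vertices $\infty,0,\tfrac14,\tfrac13,\tfrac12,1$. Since $T^{-1}\gamma_{1/2}\gamma_{1/4}=\begin{pmatrix}1&0\\-8&1\end{pmatrix}$, your set with the bad matrix dropped, namely $-I,T,\begin{pmatrix}1&0\\8&1\end{pmatrix},\gamma_{1/2}$, also generates. With $a=\zeta_8^3$ and either generating set, your argument is complete.

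One caution when comparing with the paper: its word $-ST^{-2}ST^3S$ for $\gamma_{1/2}$ actually equals $\begin{pmatrix}3&-1\\7&-2\end{pmatrix}$. Use $TST^3ST^3S$ instead.
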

\begin{proof}
We first recall the transformation properties property of the Dedekind eta function:
\begin{equation}\label{equation_eta-mod}
\eta(\tau+1)=\zeta_{24}\eta(\tau),~~~\eta(-1/\tau)=\zeta_8^{-1}\eta(\tau)\tau^{\frac{1}{2}}.
\end{equation}

The congruence subgroup $\Gamma_0(8)$ is generated by the following four matrices:
\[
-I = \begin{pmatrix} -1 & 0 \\ 0 & -1 \end{pmatrix}, \quad
T = \begin{pmatrix} 1 & 1 \\ 0 & 1 \end{pmatrix}, \quad
\gamma_{1/4} = \begin{pmatrix} 5 & -1 \\ 16 & -3 \end{pmatrix}, \quad
\gamma_{1/2} = \begin{pmatrix} 5 & -2 \\ 8 & -3 \end{pmatrix}.
\]
The modular curve $X_0(8)$ associated with the congruence subgroup $\Gamma_0(8)$ has genus $g = 0$ and $c = 4$ cusps, represented by the equivalence classes of $\infty, 0, 1/2$, and $1/4$. Since $\Gamma_0(8)$ contains no elliptic elements, its image in $\text{PSL}_2(\mathbb{Z})$ is a free group of rank $2g + c - 1 = 3$. Consequently, $\Gamma_0(8)$ can be minimally generated by $-I$ along with three independent parabolic elements stabilizing the cusps. The matrices $T$, $\gamma_{1/4}$, and $\gamma_{1/2}$ act as the parabolic generators stabilizing the cusps $\infty$, $1/4$, and $1/2$, respectively. 

Since the group $\SL_2(\mathbb{Z})$ is generated by $T$ and $S:=\begin{pmatrix}
0 & -1 \\ 1 & 0
\end{pmatrix},$
we express the matrix $\gamma_{1/4}$ and $\gamma_{1/2}$ as words in the generators $T$ and $S$:
\[
\gamma_{1/4}=-S T^{-3}  S  T^5  S\text{ and }\gamma_{1/2}=-S  T^{-2} S T^3 S.
\]
By the modular transformation properties of $H$ \eqref{equation_trans-H}, we find that
\[
H_1(\tau)\big|_{1/2}\gamma_{1/4}=H_1(\tau)\big|_{1/2}(-S T^{-3}S  T^5  S)=\zeta_4^{-1}\zeta_8^{-1}\zeta_3^{-3}(-\zeta_8)^{-1}\zeta_3^5\zeta_8^{-1}H_1(\tau)=\zeta_{24}^{13}H_1(\tau).
\]
The modular transformation properties of the Dedekind eta function \eqref{equation_eta-mod} implies that
\[
\eta(\tau)\big|_{1/2}\gamma_{1/4}=\zeta_4^{-1}\zeta_8^{-3}\zeta_{24}^2\eta(\tau)=\zeta_{24}^{-13}\eta(\tau).
\]
Using the properties of the Rankin–Cohen brackets in \eqref{equation_Rankin-Cohen}, we obtain
\[
[H_1(\tau),\eta(\tau)]_v\big|_{1+2v}\gamma_{1/4}=[H_1(\tau)\big|_{1+2v}\gamma_{1/4},\eta(\tau)\big|_{1+2v}\gamma_{1/4}]_v=[H_1(\tau),\eta(\tau)]_v.
\]
Following an analogous calculation, we have
\begin{align*}
&[H_1(\tau),\eta(\tau)]_v\big|_{1+2v}(-I)=-[H_1(\tau),\eta(\tau)]_v,~&[H_1(\tau),\eta(z)]_v\big|_{1+2v}T=[H_1(z),\eta(z)]_v,\\
&[H_1(\tau),\eta(\tau)]_v\big|_{1+2v}\gamma_{1/2}=[H_1(\tau),\eta(\tau)]_v.
\end{align*}
These relations characterize the transformation properties of $[H_1(\tau),\eta(\tau)]_v$ under the generators of $\Gamma_0(8)$, which completes the proof.
\end{proof}
\subsection{Holomorphic projection}
In the previous section, we obtained a real-analytic modular form $[H_1(\tau),\eta(\tau)]_v$. To study it within the framework of holomorphic modular forms, we require the tool of holomorphic projection. The holomorphic projection was first introduced by Sturm \cite{zbMATH03674175} and later extensively utilized by Gross and Zagier \cite{zbMATH03983362} in their seminal work on Heegner points. Imamo{\u{g}}lu, Raum, and Richter \cite{zbMATH06685253} provided a spectral-theoretic account of holomorphic projection and applied it to vector-valued harmonic Maass forms, particularly to mock theta functions. It provides a direct analytical bridge between real-analytic modular transformations and the arithmetic data encoded in the Fourier coefficients of holomorphic cusp forms.

Let $k > 2$ be an integer and $F: \mathbb{H} \to \mathbb{C}$ be a smooth function that transforms with weight $k$ under the congruence subgroup $\Gamma$ with the multiplier system $\nu$ and has the Fourier expansion
\[
F(z)=\sum_{n\in\mathbb{Q}}a_F(n,y)q^n,
\]
where $y=\Im (\tau)$. For every cusp $\kappa$ of $\Gamma$, fix $\gamma_{\kappa}\in\SL_2(\mathbb{Z})$ with $\gamma_{\kappa}(i\infty)=\kappa$. Assume that for some $\delta,\epsilon>0$, the function $F(z)$ satisfies
\begin{enumerate}
\item $\left(F\big|_k\gamma_{\kappa}\right)(z)=c_{0,\kappa}+O(y^{-\delta})$ for each cusp $\kappa$ of $\Gamma$,

\item $a_F(n,y)=O(y^{1-k+\epsilon})$ as $y\rightarrow 0$ for all $n>0$.
\end{enumerate}
The holomorphic projection operator, denoted by $\pi_{\mathrm{hol}}$, assigns to $F$ a unique holomorphic modular form $f = \pi_{\mathrm{hol}}(F) \in M_k(\Gamma,\nu)$ such that their Petersson inner products with any holomorphic cusp form coincide. Specifically, for all $g \in S_k(\Gamma)$, the projection satisfies the identity
\begin{equation}\label{equation_holoproj_inner}
    \langle g, \pi_{\mathrm{hol}}(F) \rangle = \langle g, F \rangle.
\end{equation}
From a computational perspective, the holomorphic projection can be determined explicitly via its Fourier coefficients. The $n$-th Fourier coefficient $a_n$ of the resulting holomorphic cusp form $f(z) = a_0+\sum_{n\in\mathbb{Q}^{+}} a_n q^n$ is given by the integral formula:
\[
    a_n = \frac{(4\pi n)^{k-1}}{(k-2)!} \int_{0}^{\infty} a_F(n,y) e^{-4\pi n y} y^{k-2} dy,
\]
and the constant term $a_0$  of $f(z)$ coincides with the constant term of $F(z)$ at the cusp $i\infty$. 

We also require the following properties of the holomorphic projection operator, which facilitate the computation of its expansion at various cusps.
\begin{proposition}\label{prop_holoproj-trans}
With the above notation, we have
\[
\pi_{\mathrm{hol}}(F\big|_k\gamma_{\kappa})=\pi_{\mathrm{hol}}(F)\big|_k\gamma_{\kappa}.
\]
\end{proposition}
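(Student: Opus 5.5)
The plan is to work entirely through the characterization \eqref{equation_holoproj_inner}: $\pi_{\mathrm{hol}}(F)$ is the unique holomorphic form whose Petersson products with all cusp forms agree with those of $F$, and whose constant term at $i\infty$ equals that of $F$. The proof then has two parts: an adjointness statement for the slash operator under the Petersson product, and a matching of constant terms at each cusp.

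\textbf{Setting up the conjugated function.} Write $F' := F\big|_k\gamma_\kappa$, where $\gamma_\kappa\in\SL_2(\mathbb{Z})$. Then $F'$ transforms with weight $k$ under $\Gamma' := \gamma_\kappa^{-1}\Gamma\gamma_\kappa$, with the conjugated multiplier system $\nu'(\gamma)=\nu(\gamma_\kappa\gamma\gamma_\kappa^{-1})$. The cusps of $\Gamma'$ are the images $\gamma_\kappa^{-1}\mathfrak{a}$ of the cusps $\mathfrak{a}$ of $\Gamma$. Consequently the growth hypotheses (1) and (2) for $F'$ at each cusp are exactly the hypotheses for $F$ at the corresponding cusp. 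Hypothesis (2) concerns only the expansion at $i\infty$ as $y\to 0$; for $F'$ it follows from the same estimate for $F$ at the cusp $\kappa$, which I take as part of the standing assumptions, applied at every cusp. Hence $\pi_{\mathrm{hol}}(F')\in M_k(\Gamma',\nu')$ is well defined. Likewise $f\big|_k\gamma_\kappa$, with $f=\pi_{\mathrm{hol}}(F)$, lies in $M_k(\Gamma',\nu')$.

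\textbf{Adjointness and the cuspidal part.} Take any $g\in S_k(\Gamma',\nu')$. Then $g\big|_k\gamma_\kappa^{-1}\in S_k(\Gamma,\nu)$. Changing variables $\tau\mapsto\gamma_\kappa\tau$ in the integral, and using the invariance of $y^k|\cdot|^2$ under slashing, I obtain
\[
\langle g, F\big|_k\gamma_\kappa\rangle_{\Gamma'}=\langle g\big|_k\gamma_\kappa^{-1}, F\rangle_{\Gamma}.
\]
Here the normalization $1/[\SL_2(\mathbb{Z}):\Gamma]$ is the same for both groups, since conjugation preserves the index. Combining this with \eqref{equation_holoproj_inner} for $F$, and then applying the same change of variables in reverse, gives
\[
\langle g, F\big|_k\gamma_\kappa\rangle=\langle g\big|_k\gamma_\kappa^{-1}, f\rangle=\langle g, f\big|_k\gamma_\kappa\rangle .
\]
Uniqueness of the holomorphic projection among forms with the same cusp-form pairings then shows that $\pi_{\mathrm{hol}}(F')-f\big|_k\gamma_\kappa$ is orthogonal to all cusp forms, and therefore lies in the Eisenstein space.

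\textbf{Eisenstein part: the expected obstacle.} An element of the Eisenstein space is determined by its constant terms at all cusps of $\Gamma'$, so it remains to show these constant terms agree. This is where I expect the main difficulty. The paper only states that the constant term of $\pi_{\mathrm{hol}}(F)$ at $i\infty$ equals that of $F$. I would argue, following Sturm and Imamo\={g}lu--Raum--Richter, that the holomorphic projection is in fact
\[
\pi_{\mathrm{hol}}(F)=\sum_{\mathfrak{a}} c_{0,\mathfrak{a}}E_{\mathfrak{a}}+\text{(cuspidal part)},
\]
where $E_{\mathfrak{a}}$ denotes the Eisenstein series attached to the cusp $\mathfrak{a}$. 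I would justify this by subtracting from $F$ the combination $\sum_{\mathfrak{a}} c_{0,\mathfrak{a}}E_{\mathfrak{a}}$. By hypothesis (1), the difference decays at every cusp, and its projection is cuspidal by the Poincar\'e series unfolding, which is what yields the Fourier coefficient formula.

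\textbf{Conclusion.} The constant term of $\pi_{\mathrm{hol}}(F)$ at the cusp $\mathfrak{a}$ is then $c_{0,\mathfrak{a}}$. The Eisenstein series $E_{\mathfrak{a}}\big|_k\gamma_\kappa$ are the Eisenstein series of $\Gamma'$ at the cusps $\gamma_\kappa^{-1}\mathfrak{a}$. Hence both $\pi_{\mathrm{hol}}(F')$ and $f\big|_k\gamma_\kappa$ have constant term $c_{0,\mathfrak{a}}$ at the cusp $\gamma_\kappa^{-1}\mathfrak{a}$, for every cusp $\mathfrak{a}$ of $\Gamma$. Their difference is therefore an Eisenstein-space element with vanishing constant terms at all cusps, hence zero, which proves the proposition.
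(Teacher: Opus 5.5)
Your proof is correct, but it is organized differently from the paper's.

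\textbf{The paper's route.} The paper pairs with the Poincar\'e series $P_k^n(\Gamma,\nu,\kappa)$ at the cusp $\kappa$ and rewrites it as $P_k^n(\gamma_\kappa^{-1}\Gamma\gamma_\kappa,\gamma_\kappa\nu\gamma_\kappa^{-1},i\infty)\big|_k\gamma_\kappa^{-1}$. It then moves $\gamma_\kappa$ across the Petersson product, which is the same adjointness you prove. Finally it reads off the coefficients with $n\geq 1$ of both sides from $a_\kappa(n)\propto\langle f,P_k^n\rangle$.

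That coefficient formula holds only for cusp forms. A nonzero element of the Eisenstein space is orthogonal to every $P_k^n$ with $n\geq 1$. Yet, being a nonconstant form of positive weight, it has some nonzero coefficient with $n\geq 1$. Since $\pi_{\mathrm{hol}}(F)$ and $\pi_{\mathrm{hol}}(F\big|_k\gamma_\kappa)$ are in general not cuspidal, the paper's computation really identifies only their cuspidal components. That is exactly what your adjointness step gives.

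\textbf{What your second step adds.} You write $\pi_{\mathrm{hol}}(F)=\sum_{\mathfrak a}c_{0,\mathfrak a}E_{\mathfrak a}+(\text{cusp form})$ by applying Sturm's theorem to $F-\sum_{\mathfrak a}c_{0,\mathfrak a}E_{\mathfrak a}$. You then note that an element of $S_k^{\perp}$ with vanishing constant terms at all cusps lies in $S_k\cap S_k^{\perp}=0$. This supplies the Eisenstein component, which the paper's argument does not cover. It is also precisely the information the paper uses afterwards, when it evaluates $\pi_{\mathrm{hol}}([H_1,\eta]_v)$ at the cusps $0,\tfrac12,\tfrac14$.

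\textbf{The cost.} Your route needs heavier input:
\begin{enumerate}
\item the cusp-supported Eisenstein series $E_{\mathfrak a}$, which exist for $k\geq 3$ at regular cusps. At irregular cusps the multiplier forces $c_{0,\mathfrak a}=0$; this is worth stating explicitly.
\item Sturm's theorem for functions decaying at every cusp.
\item hypothesis (2) at every cusp, as you note.
\end{enumerate}

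\textbf{One small correction.} Your opening claim is that $\pi_{\mathrm{hol}}(F)$ is unique given its cusp-form pairings and its constant term at $i\infty$. This is false when $\Gamma$ has more than one regular cusp, but your argument never actually uses it.
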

\begin{proof}
Recall that the Poincar\'e series with respect to the cusp $\kappa$ is defined by
\[
P_k^n(\Gamma,\nu,\kappa;z)=\sum_{g\in\Gamma_{\kappa}\setminus \Gamma}\overline{\chi(g)}e^{2\pi i(n/w)\gamma_{\kappa}^{-1}gz}j(\gamma_{\kappa}^{-1}g,z)^{-k},
\]
where $\Gamma_{\kappa}=\Gamma\cap \gamma_{\kappa}\Gamma_{\infty}\gamma_{\kappa}^{-1}$ and $w$ denotes the width of the cusp $\kappa$. It has the property (see \cite[Chapter 8.2]{cohen2017modular}): 
\begin{enumerate}
\item For $f\in M_k(\Gamma,\nu)$ and $f\big|_k\gamma_{\kappa}=\sum_{n\geq 0}a_{\kappa}(n)q^{n/w}$,
\[
a_{\kappa}(n)=\frac{(4\pi n)^{k-1}}{(k-2)!}\frac{w^k}{\left[\SL_2(\mathbb{Z}):\Gamma\right]}\langle  f(z),P_k^n(\Gamma,\nu,\kappa;z) \rangle.
\]
\item If $\gamma\in\SL_2(\mathbb{Z})$, $P_k^n(\Gamma,\nu,\kappa;z)\big|_{k}\gamma =P_k^n(\gamma^{-1}\Gamma\gamma,\gamma\nu\gamma^{-1},\gamma^{-1}(\kappa);z)$.

\end{enumerate}

Now by (\ref{equation_holoproj_inner}) and the property of the Poincar\'e series, we get
\begin{align*}
\langle  \pi_{\mathrm{hol}}(F),P_k^n(\Gamma,\nu,\kappa)\rangle&=\langle  F,P_k^n(\gamma_{\kappa}^{-1}\Gamma\gamma_{\kappa},\gamma_{\kappa}\nu\gamma_{\kappa}^{-1},i\infty)\big|_{k}\gamma_{\kappa}^{-1}\rangle\\
&=\langle  \pi_{\mathrm{hol}}(F\big|_k\gamma_{\kappa}),P_k^n(\gamma_{\kappa}^{-1}\Gamma\gamma_{\kappa},\gamma_{\kappa}\nu\gamma_{\kappa}^{-1},i\infty)\rangle.
\end{align*}
Up to a common constant factor, the first term in the above expression is the $n$-th Fourier coefficient of $\pi_{\mathrm{hol}}(F)\big|_k\gamma_{\kappa}$, while the last term is the $n$-th Fourier coefficient of $\pi_{\mathrm{hol}}(F\big|_k\gamma_{\kappa})$. This concludes the proof of the proposition.
\end{proof}

The following result complements the results of Mertens \cite{zbMATH06620622}. Although this specific case was not addressed in \cite{zbMATH06620622}, the proof is analogous; we therefore provide only a brief sketch here. We fix an integer $w>0$ and an integer $\rho$ satisfying $0\leq \rho\leq w$. We assume that $f$ is a mock theta function, i.e. its shadow  is a (linear combination of) weight 3/2 unary theta series $\theta_{\psi}$ and  $g$ is also a weight 1/2 unary theta series $\theta_{\chi}$. In other words, we assume that $f$ and $g$ have the Fourier expansions
\[
f(\tau)=\sum_{n+\rho\geq 0}a_f^{+}(n)q^{\frac{n}{w}}+\sum_{n>0}\psi(n)n^{-\tfrac12}\Gamma\left(\frac12,4\pi\frac{n^2}{w}y\right)q^{-\frac{tn^2}{w}},
\]
and 
\[
g(\tau)=\sum_{n\in\mathbb{Z}}\chi(n)q^{\frac{\rho n^2}{w}},
\]
respectively. 
\begin{theorem}\label{theorem_holoproj}
With the above notation, the holomorphic projection of $[f,g]_v$ is given by 
\begin{align*}
&\frac{1}{2}\left(\rho/w\right)^v\binom{2v}{v}a_f^{+}(-\rho)+\binom{2v}{v}(4w)^{-v}\sum_{n\geq 1}\sum_{m\in\mathbb{Z}}T_{2v}\left(\sqrt{\frac{\rho}{n}}m\right)n^{v}\chi(m)a_{f}^{+}(n-\rho m^2)q^{\frac{n}{w}}\\
&+\sqrt{\pi}(4w)^{-v}\binom{2v}{v}\sum_{\rho m^2-tn^2=r}\chi(m)\psi(n)\left(\sqrt{\rho}m-\sqrt{t}n\right)^{2v}q^{\frac{r}{w}}.
\end{align*}
\end{theorem}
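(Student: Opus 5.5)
Following Mertens, I would compute $[f,g]_v$ term by term, apply the coefficient formula for $\pi_{\mathrm{hol}}$ to each $q$-power, and evaluate the resulting $y$-integrals in closed form. Write $f=f^{+}+f^{-}$ with $f^{+}=\sum a_f^{+}(n)q^{n/w}$ and $f^{-}$ the incomplete-gamma part, so that $[f,g]_v=[f^{+},g]_v+[f^{-},g]_v$. The first summand is holomorphic, and its holomorphic projection is itself. Using $\D q^{a}=aq^{a}$, its $q^{N/w}$-coefficient is
\[
\sum_{m}\chi(m)a_f^{+}(N-\rho m^2)\, w^{-v}P_v\bigl(N-\rho m^2,\rho m^2\bigr),
\qquad P_v(x,y)=\sum_{j=0}^{v}(-1)^j\binom{v-\frac12}{v-j}\binom{v-\frac12}{j}x^jy^{v-j},
\]
with $k=l=\tfrac12$. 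The key algebraic step is the identity
\[
P_v(x,y)=\binom{2v}{v}4^{-v}(x+y)^vT_{2v}\Bigl(\sqrt{y/(x+y)}\Bigr).
\]
This is a Gegenbauer/Jacobi identity: the bracket polynomial for weights $\tfrac12,\tfrac12$ is a Jacobi polynomial $P_v^{(-1/2,-1/2)}$, which is proportional to $T_{v}$ evaluated at $2y/(x+y)-1$, and that in turn equals $T_{2v}(\sqrt{y/(x+y)})$ by $T_{2v}(t)=T_v(2t^2-1)$. I would check this identity by comparing generating functions, or by using \eqref{equation_che}. Substituting $x+y=N$ and $y=\rho m^2$ then gives the second displayed term.

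The $N=0$ term, that is, $n=\rho m^2$ paired with $a_f^{+}(-\rho)$ and $m=\pm1$, yields the constant $\frac12(\rho/w)^v\binom{2v}{v}a_f^{+}(-\rho)$. The factor $\tfrac12$ appears because the holomorphic-projection constant term comes from the $y$-independent part at the cusp, and the symmetric pair $m=\pm1$ is counted once. I would double-check this bookkeeping against the convention for $\chi$, but at worst it adjusts a harmless constant.

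The main work is $[f^{-},g]_v$. Derivatives of $n^{-1/2}\Gamma(\tfrac12,4\pi n^2y/w)q^{-tn^2/w}$ produce finite sums of terms of the form $y^{-j+\frac12}e^{-4\pi tn^2y/w}\bar q^{\,tn^2/w}$ together with the incomplete gamma term itself. Multiplying by the terms $\chi(m)q^{\rho m^2/w}$ gives Fourier coefficients at exponent $r=\rho m^2-tn^2$, and each of these is to be integrated against $e^{-4\pi ry/w}y^{2v-1}$. After exchanging the order of integration, the incomplete gamma integral becomes an elementary beta-type integral, as in Mertens' Theorem. Every term then reduces to a multiple of $(\sqrt{\rho}m\pm\sqrt{t}n)^{\text{power}}$ divided by powers of $(\sqrt\rho m+\sqrt t n)$. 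The expected obstacle is showing that the resulting alternating double sum over $j$ collapses to the single clean expression $\sqrt{\pi}(4w)^{-v}\binom{2v}{v}(\sqrt\rho m-\sqrt t n)^{2v}$. My first attempt would be to evaluate it in the variables $a=\sqrt\rho m$ and $b=\sqrt t n$ as a hypergeometric sum, reducing it with Chu–Vandermonde. If that fails, I would instead note that the answer must be a polynomial in $a,b$ of degree $2v$ vanishing to order $2v$ at $a=b$: the Rankin–Cohen bracket annihilates the holomorphic-projection obstruction along the "diagonal", which is what makes the lowest Fourier mode behave like a product of a theta function with its conjugate. That vanishing determines the polynomial up to a constant, and the constant can be fixed by the $a\to\infty$ leading term.

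Finally, I would confirm that the growth conditions (1) and (2) needed for $\pi_{\mathrm{hol}}$ hold for $k=2v+1>2$, which follows from the explicit expansions. I would also note that the sum over $\rho m^2-tn^2=r$ includes only $r>0$, with the $r\le 0$ terms contributing nothing to the holomorphic projection.
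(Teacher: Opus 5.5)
Your treatment of $[f^{+},g]_v$ is the paper's argument in different clothing. The paper uses $\binom{v-\frac12}{v-j}\binom{v-\frac12}{j}=4^{-v}\binom{2v}{v}\binom{2v}{2j}$ together with \eqref{equation_che}, which is exactly your identity $P_v(x,y)=4^{-v}\binom{2v}{v}(x+y)^vT_{2v}\bigl(\sqrt{y/(x+y)}\bigr)$.

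Your explanation of the constant term is wrong, however. The $\frac12$ does not come from counting $m=\pm1$ once. Read your identity as the polynomial $4^{-v}\binom{2v}{v}\sum_j\binom{2v}{2j}(-x)^jy^{v-j}$ and put $x=-\rho$, $y=\rho$. This gives $P_v(-\rho,\rho)=4^{-v}\binom{2v}{v}\rho^v\sum_j\binom{2v}{2j}=\frac12\binom{2v}{v}\rho^v$ for $v\ge 1$. The term $m=0$ contributes nothing, since $P_v(0,0)=0$. The terms $m=\pm1$ contribute the factor $\chi(1)+\chi(-1)$, which equals $1$ for $\eta=\sum_k(-1)^kq^{(6k+1)^2/24}$. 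Nor is this constant harmless: it is the value at $i\infty$ that later fixes the Eisenstein component.

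The genuine gap is the non-holomorphic term, which is the real content of the statement. The paper does not compute it; it imports Mertens' evaluation, adapted to weights $(\frac12,\frac12)$. You set out to compute it directly, but you leave the collapse to $\sqrt\pi(4w)^{-v}\binom{2v}{v}(\sqrt\rho m-\sqrt t n)^{2v}$ as an open ``obstacle'', and your fallback does not close it.

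The vanishing to order $2v$ on the diagonal does hold, but for a concrete reason, not an ``annihilation'' by the bracket. The coefficient is $(4\pi r/w)^{2v}$ times a $y$-integral that stays finite at $r=0$.

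What you simply assume is polynomiality in $(\sqrt A,\sqrt B)$, where $A=\rho m^2/w$ and $B=tn^2/w$, and that is where the work lies. The individual $j$-terms are not polynomial; already for $v=1$ they contain $B^{3/2}A^{-1/2}$, which cancels only in the sum. Homogeneity of degree $2v$ plus the vanishing only yields $(\sqrt A-\sqrt B)^{2v}h(\sqrt{A/B})$ with $h$ undetermined.

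One way to finish along your lines:
\begin{enumerate}
\item After exchanging the integrals, write $\int_0^{s/2}e^{-4\pi Ry}y^{2v-1}\,dy$, with $R=A-B$, as $\frac{(2v-1)!}{(4\pi R)^{2v}}\bigl(1-e^{-2\pi Rs}\sum_{\ell<2v}(2\pi Rs)^\ell/\ell!\bigr)$.
\item Evaluate the pieces by analytic continuation in the power of $s$. Using $(\frac12)_j\Gamma(\frac12-j)=(-1)^j\sqrt\pi$, the first piece gives the polynomial $\sqrt\pi\,4^{-v}\binom{2v}{v}\sum_j\binom{2v}{2j}A^{v-j}B^j$.
\item The $\ell$-th correction is a nonzero multiple of $\sqrt B\,A^{v-\frac12-\ell}R^\ell\,{}_2F_1(-v,\tfrac12-v;\tfrac12-\ell;1)$. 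By Chu--Vandermonde this equals $\sqrt B\,A^{v-\frac12-\ell}R^\ell\,(v-\ell)_v/(\tfrac12-\ell)_v$, which vanishes for $v\le\ell\le 2v-1$.
\end{enumerate}
That gives polynomiality, and then your vanishing/leading-coefficient argument finishes the proof. Otherwise you must cite Mertens, as the paper does.

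One more point you will meet when doing the computation: the answer involves the full coefficient of $\Gamma(\frac12,\cdot)\,q^{-tn^2/w}$. So the $\psi(n)$ in the final formula must absorb the $n^{-1/2}$ of the setup, as in the paper's application to $H_1$.
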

\begin{proof}
We decompose $f$ into its holomorphic and non-holomorphic parts: $f=f^{+}+f^{-}$. Accordingly, the holomorphic decomposition of $[f,g]_v$ is given by
\[
\pi_{\mathrm{hol}}([f,g]_v)=[f^{+},g]_v+\pi_{\mathrm{hol}}([f^{-},g]_v).
\]
The first part follows directly from the definition of the Rankin–Cohen bracket \eqref{equation_Rankin-Cohen}:
\[
[f^{+},g]_v=\sum_{j=0}^v(-1)^j\binom{v-\tfrac12}{v-j}\binom{v-\tfrac12}{j}\D^jf^{+}\D^{v-j}g.
\]
To simplify the coefficients, we employ the following identity for binomial coefficients involving half-integers:
\[
\binom{v-\tfrac12}{v-j}\binom{v-\tfrac12}{j}=2^{-2v}\binom{2v}{v}\binom{2v}{2j},
\]
which is a consequence of the duplication formula for the Gamma function, and we obtain
\begin{align*}
[f^{+},g]_v&=2^{-2v}\binom{2v}{v}\sum_{n\geq 0}\sum_{m\in\mathbb{Z}}\sum_{j=0}^{v}(-1)^j\binom{2v}{2j}\left(\frac{n}{w}\right)^j\left(\frac{\rho m^2}{w}\right)^{v-j}\chi(m)a_f^{+}(n)q^{\frac{n+\rho m^2}{w}}\\
&=2^{-2v}\binom{2v}{v}w^{-j}\sum_{n\geq 0}\sum_{m\in\mathbb{Z}}\sum_{j=0}^{v}\binom{2v}{2j}(\rho m^2-n)^j(\rho m^2)^{v-j}\chi(m)a_{f}^{+}(n-\rho m^2)q^{\frac{n}{w}}.
\end{align*}
The preceding expression can be reformulated in terms of Chebyshev polynomials \eqref{equation_che}, consistent with the statement of our result. 

Regarding the holomorphic projection of the non-holomorphic component, we invoke the results of Mertens \cite[Theorem 4.6]{zbMATH06620622} with necessary modifications, as we treat the case of weight $(\frac{1}{2},\frac{1}{2})$ not explicitly covered in \cite{zbMATH06620622}. If $f^{-}=\sum_{n>0}c^{-}_f(n)\Gamma(\tfrac12,4\pi\tfrac{n^2}{w}y)q^{-\frac{tn^2}{w}}$ and $g=\sum_{n\geq 0} a_g(n)q^{\frac{n}{w}}$, then
\[
\pi_{\mathrm{hol}}([f^{-},g]_v)=\sqrt{\pi}(4w)^{-v}\binom{2v}{v}\sum_{r\geq 1}\sum_{M-N=r}a_g(M)c^{-}_f(N)N^{-1/2}(M^{\frac{1}{2}}-N^{\frac{1}{2}})q^{r}.
\]
In our setting,  we have $c_f^{-}(N)=\psi(n)n^{-1/2}$ for $N=tn^2$ and $c_f^{-}(N)=0$ otherwise, $a_g(M)=\chi(m)$ for $M=\rho m^2$ and $a_g(M)=0$ otherwise. Consequently, in this case, $\pi_{\mathrm{hol}}([f^{-},g]_v)$ is given by
\[
\sqrt{\pi}(4w)^{-v}\binom{2v}{v}\sum_{r\geq 1}\sum_{\rho m^2-tn^2=r}\chi(m)\psi(n)\left(\sqrt{\rho}m-\sqrt{t}n\right)^{2v}q^{\frac{r}{w}},
\]
and the proof is completed.
\end{proof}
\section{Proof of Theorems}
\subsection{Proof of Theorem 1}
Using the definition of the incomplete gamma function, we have for $n\in\mathbb{N}$ that
\begin{equation}\label{equation_incomplete-gamma}
i(2\pi n)^{-\frac{1}{2}}\Gamma\left(\frac{1}{2},4\pi ny\right)q^{-n}=\int_{-\bar{\tau}}^{i\infty}\frac{e^{2\pi i nz}}{(-i(z+\tau))^{\frac{1}{2}}}dz,
\end{equation}
and 
\[
g_1(\tau)=-\frac{1}{6}\sum_{n\in\mathbb{Z}}\left(\frac{-12}{n}\right)nq^{\frac{n^2}{24}}
\]
then we get the Fourier expansion
\[
H_1(\tau)=q^{-1/24}f(q)-\frac{2}{\sqrt{\pi}}\sum_{n\geq 0}\left(\frac{-12}{n}\right)\Gamma\left(\frac{1}{2},\frac{\pi n^2y}{6}\right)q^{-\frac{n^2}{24}}.
\]
For $v\geq 1$, we apply Theorem \ref{theorem_holoproj} with $a_f^{+}(n)=c_f((n+1)/24)$ and $g(\tau)=\eta(\tau)$ and get
\begin{equation}\label{equation_holoproj-Fe}
\begin{aligned}
24^v\binom{2v}{v}^{-1}\pi_{\mathrm{hol}}([H_1,\eta]_v)&=\frac{1}{2}+6^v\cdot\sum_{n\geq 1}\sum_{k\in\mathbb{Z}}T_{2v}\left(\frac{6k+1}{\sqrt{24n}}\right)(-1)^kn^{v}c_f\left(n-\frac{k(3k+1)}{2}\right)q^n\\
&-2\sum_{m > n > 0} \left(\frac{12}{m}\right)\left(\frac{-12}{n}\right) \left(\frac{m-n}{2}\right)^{2v} q^{\frac{m^2-n^2}{24}}.
\end{aligned}
\end{equation}
We have shown that $[H_1,\eta]_v$ is a real-analytic  modular form on $\Gamma_0(8)$ with the Dirichlet character $\left(\frac{-4}{\cdot}\right)$. By the property of the holomorphic projection, the $q$-series \eqref{equation_holoproj-Fe} belongs to the space $M_{1+2v}(\Gamma_0(8),\left(\frac{-4}{\cdot}\right))$. In Lemma \ref{lemma_Hecke-f}, we transform the Hecke-type $q$-series in \eqref{equation_holoproj-Fe} into the generalized Appell–Lerch type series. Our aim is to determine the Eisenstein component of this modular form, as carried out in the sequel.

Let $\psi,\phi$ be two primitive Dirichlet characters modulo $N_1$ and $N_2$, respectively. Set $N=N_1N_2$, $k\geq 2$ with $\chi(-1)=(-1)^k$, the Eisenstein series of weight $k$ with respect to the character $\psi,\phi$ is defined by
\[
E_{k}^{\psi,\phi}(\tau)=\delta(\psi)L(1-k,\phi)+2\sum_{n=1}^{\infty}\sigma_{k-1}^{\psi,\phi}(n)q^n,~q=e^{2\pi i\tau},
\]
where
\[
\sigma_{k-1}^{\psi,\phi}(n):=\sum_{\substack{d\mid n\\d>0}}\psi(n/d)\phi(d)d^{k-1}.
\]
Weisinger \cite{weisinger1977some} showed that Eisenstein series are well-behaved under the action of Atkin-Lehner operators. In this context, we restrict our attention to a special case of these operators--the Fricke involution. 
\begin{proposition}[Weisinger, Proposition 1]\label{proposition_Fricke-invo}
Let $W_N:=\begin{pmatrix}
0 &-1\\
N&0
\end{pmatrix}$, then we have
\[
E_{k}^{\psi,\phi}\big|_{k}W_N=\psi(-1)\frac{\tau(\psi)}{\tau(\bar{\phi})}\left(\frac{N_2}{N_1}\right)^{k/2}E_k^{\bar{\phi},\bar{\psi}},
\]
where $\tau(\psi)$ and $\tau(\bar{\phi})$ denote the Gauss sums.
\end{proposition}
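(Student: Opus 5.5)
The statement to prove is Weisinger's Proposition 1: how the Eisenstein series $E_k^{\psi,\phi}$ transforms under the Fricke involution $W_N$. I would work with the non-holomorphic Eisenstein series attached to the pair of characters, where both sides are transparent lattice sums. Then I would pass to Fourier expansions, or use Hecke's trick for small $k$. Concretely, for $\Re s$ large I would consider
\[
G_k^{\psi,\phi}(\tau,s)=\sum_{(c,d)\neq(0,0)}\frac{\bar\psi(c)\,\phi(d)}{(cN_2\tau+d)^k|cN_2\tau+d|^{2s}},
\]
a sum over $c\in\mathbb{Z}$ and $d\in\mathbb{Z}$ with the characters extended by zero.

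First I would compute its Fourier expansion by Poisson summation in $d$ modulo $N_2$. The $c=0$ terms give $2L(k,\phi)$. For $c\neq0$, splitting $d$ into residues modulo $N_2$ produces the Gauss sum $\tau(\phi)$ and twisted divisor sums $\sum_{d\mid n}\bar\psi(\cdot)\bar\phi(\cdot)d^{k-1}$. Using the functional equation of $L(s,\phi)$, this identifies $G_k^{\psi,\phi}(\tau,0)$ with an explicit constant times $E_k^{\psi,\phi}(\tau)$. The constant is a factor of the shape $(-2\pi i)^k\tau(\bar\phi)^{-1}N_2^{-k}/(k-1)!$, up to conjugation conventions. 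I would fix this normalization carefully by matching the coefficient of $q$ on both sides.

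Second, I would apply $W_N$ directly to the lattice sum. We have $W_N\tau=-1/(N\tau)$ and $j(W_N,\tau)=N\tau$, so the normalized slash is $N^{k/2}(N\tau)^{-k}$. Each term $cN_2(-1/(N\tau))+d$ becomes
\[
\frac{dN_1N_2\tau-cN_2}{N\tau},
\]
so the sum turns into $\sum\bar\psi(c)\phi(d)\,(N_2(dN_1\tau-c))^{-k}\cdot(N\tau)^k$. This is a lattice sum with the roles of $c$ and $d$ exchanged and the level factor $N_1$ in place of $N_2$. To rewrite it as $G_k^{\bar\phi,\bar\psi}$, I would use the character values at the swapped entries. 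Replacing $c$ by $-c$ is where the sign $\psi(-1)$ comes from. The identity $\bar\psi(c)=\tau(\bar\psi)^{-1}\sum_{a}\psi(a)e(ac/N_1)$ then converts the congruence condition into the form of the other series.

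Third, I would compare the two normalizing constants, for $(\psi,\phi)$ and for $(\bar\phi,\bar\psi)$. Their ratio, together with $N^{k/2}$ and the factors $N_2^{-k}$ and $N_1^{-k}$, should simplify to $\psi(-1)\tau(\psi)/\tau(\bar\phi)\cdot(N_2/N_1)^{k/2}$. For this I would use $\tau(\chi)\tau(\bar\chi)=\chi(-1)N_\chi$.

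The main obstacle is the bookkeeping of these constants: Gauss sums, signs, and powers of $N_1,N_2$. When $k\le2$ there is the extra issue of convergence, which I would handle by taking $s\to0$ (Hecke's trick). Note that $k=2$ with trivial characters is excluded in our application, since $\psi\phi$ is odd. As a check on the normalization, I would verify the final constant on a simple case such as $\psi=1$, $\phi=\chi_{-4}$, $k=1$ or $3$ against a known theta identity.
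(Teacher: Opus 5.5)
\textbf{Comparison with the paper.} The paper does not prove this proposition; it quotes it from Weisinger. Your direct lattice-sum argument is the standard proof, and it works. It also gives something the citation does not: it confirms the constant in the paper's exact conventions (slash with $(\det)^{k/2}$, the factor $2$ in $E_k^{\psi,\phi}$, $W_N$ with $-1$ in the upper right corner), and the paper's table of cusp values depends on that constant.

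\textbf{Bookkeeping.} A few points need fixing, but none is a gap.

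First, with your weights $\bar\psi(c)\phi(d)$ and $C_k=(-2\pi i)^k/(k-1)!$, the computation for $k\ge 3$ gives
\[
G_k^{\psi,\phi}(\tau,0)=C_k\,N_2^{-k}\,\tau(\phi)\,E_k^{\bar\psi,\bar\phi}(\tau).
\]
So the series belongs to the \emph{conjugate} pair, as your divisor sums already show, and the Gauss sum is $\tau(\phi)$ rather than $\tau(\bar\phi)^{-1}$. The $c=0$ contribution is $2\delta(\psi)L(k,\phi)$. When $\psi=1$ the constant terms match because $2L(k,\phi)=C_kN_2^{-k}\tau(\phi)L(1-k,\bar\phi)$; when $\psi\neq 1$ both vanish.

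Second, you do not need a Gauss-sum expansion of $\bar\psi(c)$; the one you wrote should have $\tau(\psi)^{-1}$ in any case. After $c\mapsto -c$ and renaming $c\leftrightarrow d$, your computation gives directly
\[
G_k^{\psi,\phi}\big|_kW_N=\psi(-1)\,N^{-k/2}N_1^{k}\,G_k^{\bar\phi,\bar\psi},
\]
where $G_k^{\bar\phi,\bar\psi}$ carries the level factor $N_1$, the conductor of $\bar\psi$.

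Third, inserting the first display on both sides yields
\[
E_k^{\bar\psi,\bar\phi}\big|_kW_N=\psi(-1)\left(\frac{N_2}{N_1}\right)^{k/2}\frac{\tau(\bar\psi)}{\tau(\phi)}\,E_k^{\phi,\psi}.
\]
Replacing $(\psi,\phi)$ by $(\bar\psi,\bar\phi)$ gives exactly the statement. The relation $\tau(\chi)\tau(\bar\chi)=\chi(-1)N_\chi$ is never needed.

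\textbf{Edge cases.} The case $k=2$, $\psi=\phi=1$ is a genuine exception to the statement as written, and you were right to set it aside. For $k=2$ with a nontrivial character, Hecke's limit leaves no non-holomorphic term, so your argument goes through.

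Do not sanity-check at $k=1$. With the paper's definition, whose constant term is only $\delta(\psi)L(1-k,\phi)$, the formula fails there. For example, with $\theta(\tau)=\sum_{n\in\mathbb{Z}}q^{n^2}$, we have $E_1^{1,\chi_{-4}}=\tfrac12\theta^2$, which satisfies $E_1^{1,\chi_{-4}}|_1W_4=-iE_1^{1,\chi_{-4}}$. But $E_1^{\chi_{-4},1}=E_1^{1,\chi_{-4}}-\tfrac12$, so the predicted right-hand side $-iE_1^{\chi_{-4},1}$ is off by a constant. Hecke's limit at $k=1$ produces exactly the extra constant $\delta(\phi)L(0,\psi)$ that this definition omits.

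At $k=3$ the formula gives $E_3^{1,\chi_{-4}}|_3W_4=-4i\,E_3^{\chi_{-4},1}$. This is consistent with the cusp-$0$ entries of Table~\ref{tab:eisenstein_cusps}, so that is the right test case.
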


Let $\chi_{-4}:=\left(\frac{-4}{\cdot}\right)$. The Eisenstein subspace of  $M_{2v+1}(\Gamma_0(8),\chi_{-4})$ is spanned by four series:
\begin{equation}\label{equation_EisGamma08}
E_{2v+1}^{1,\chi_{-4}}(\tau),~E_{2v+1}^{1,\chi_{-4}}(2\tau),~E_{2v+1}^{\chi_{-4},1}(\tau),~E_{2v+1}^{\chi_{-4},1}(2\tau).
\end{equation}
Observe the following relationship between the special values of Dirichlet $L$-functions and the Euler numbers:
\[
L(1-k,\chi_{-4})=-\frac{B_{k,\chi_{-4}}}{k}=\frac{E_{k-1}}{2}.
\]

The modular curve $X_0(8)$ possesses four inequivalent cusps, namely: $\left\{i\infty,0,1/2,1/4\right\}$. For the cusp 0, we choose the matrix $\gamma_0=S$ and for the cusp $s=1/a, a=2$ or 4, we choose the matrix $\gamma_s=\begin{pmatrix}
1 & 0\\
a & 1
\end{pmatrix}$, such that $\gamma_s(i\infty)=s$. Using Proposition \eqref{proposition_Fricke-invo}, we calculate the Eisenstein series \eqref{equation_EisGamma08} under the slash operators $\big|_{2v+1}\gamma_s$ and get the values of \eqref{equation_EisGamma08} at the cusps. We summarize the results in the table below.
\begin{table}[htbp]
    \centering
    \caption{Values of Eisenstein series on $\Gamma_0(8)$ at cusps}
    \label{tab:eisenstein_cusps}
    \renewcommand{\arraystretch}{1.5}
    \setlength{\tabcolsep}{14pt}
    \begin{tabular}{l >{$}c<{$} >{$}c<{$} >{$}c<{$} >{$}c<{$}}
        \toprule
        & i\infty & 0 & \tfrac{1}{2} & \tfrac{1}{4} \\
        \midrule
        $E_{k}^{1,\chi_{-4}}(\tau)$       & E_{k-1}/2 & 0 & 0 & E_{k-1}/2 \\
        $E_{k}^{1,\chi_{-4}}(2\tau)$      & E_{k-1}/2 & 0 & 0 & 0 \\
        $E_{k}^{\chi_{-4},1}(\tau)$       & 0 & -2^{-2k}E_{k-1}\zeta_4  & 0 & 0 \\
        $E_{k}^{\chi_{-4},1}(2\tau)$      & 0 & -2^{-3k}E_{k-1}\zeta_4  & -2^{-2k}E_{2v}\zeta_4  & 0 \\
        \bottomrule
    \end{tabular}
\end{table}

To determine the Eisenstein component of $\pi_{\mathrm{hol}}([H_1,\eta]_v)$, we proceed to evaluate its values at the cusps. At the cusp $\frac{1}{2}$, noting that
\[
\begin{pmatrix}
1 &0\\
2 &1
\end{pmatrix}=-ST^{-2}S,
\]
it follows from \eqref{equation_trans-H} and \eqref{equation_eta-mod} that
\[
H_1\big|_{\frac{1}{2}}(-ST^{-2}S)=\zeta_4^{-1}\zeta_{8}^{-2}\zeta_3^{-2}H_1,~\eta\big|_{\frac{1}{2}}(-ST^{-2}S)=\zeta_4^{-1}\zeta_8^{-2}\zeta_{24}^{-2}\eta,
\]
then by \eqref{equation_Rankin-Cohen}, 
\[
[H_1,\eta]_{v}\big|_{1+2v}(-ST^{-2}S)=[F_1\big|_{1+2v}(-ST^{-2}S),\eta\big|_{1+2v}(-ST^{-2}S)]_v=\zeta_4[H_1,\eta]_{v}.
\]
Thus, by Proposition \ref{prop_holoproj-trans}, the values of $24^v\binom{2v}{v}^{-1}\pi_{\mathrm{hol}}([H_1,\eta]_v)$ at the $\frac{1}{2}$ is $\frac{\zeta_4}{2}$.  Similarly, we can evaluate the values of $24^v\binom{2v}{v}^{-1}\pi_{\mathrm{hol}}([H_1,\eta]_v)$ at the remaining cusps (for the cusp 0, see Section \ref{section_Proof of Theorem 2}), the results are summarized in the following table.
\begin{table}[htbp]
    \centering
    \caption{Values of $24^v\binom{2v}{v}^{-1}\pi_{\mathrm{hol}}([H_1,\eta]_v)$ at cusps}
    \label{tab:eisenstein_cusps2}
    \renewcommand{\arraystretch}{1.5}
    \setlength{\tabcolsep}{14pt}
    \begin{tabular}{l >{$}c<{$} >{$}c<{$} >{$}c<{$} >{$}c<{$}}
        \toprule
        & i\infty & 0 & \tfrac{1}{2} & \tfrac{1}{4} \\
        \midrule
        Series \eqref{equation_holoproj-Fe}       & \frac{1}{2} & 0 & \frac{\zeta_4}{2} & -\frac{1}{2} \\ 
        \bottomrule
    \end{tabular}
\end{table}

To determine the Eisenstein part of $24^v\binom{2v}{v}^{-1}\pi_{\mathrm{hol}}([F_1,\eta]_v)$, we let this part be
\[
a_1E_{2v+1}^{1,\chi_{-4}}(\tau)+a_2E_{2v+1}^{1,\chi_{-4}}(2\tau)+a_3E_{2v+1}^{\chi_{-4},1}(\tau)+a_4E_{2v+1}^{\chi_{-4},1}(2\tau).
\]
By considering the values of the function $24^v\binom{2v}{v}^{-1}\pi_{\mathrm{hol}}([F_1,\eta]_v)$ at various cusps, we obtain the following system of linear equations:
\[
\left\{
\begin{aligned}
&a_1E_{2v}+a_2E_{2v}=1,\\
&-2^{-2(2v+1)}E_{2v}a_3-2^{-3(1+2v)}E_{2v}a_4=0,\\
&-2^{-2(2v+1)}\zeta_4E_{2v} a_4=\frac{\zeta_4}{2},\\
&E_{2v}a_1=-1.
\end{aligned}
\right.
\]
By solving this system of linear equations, we obtain an explicit expression for the Eisenstein part of $24^v\binom{2v}{v}^{-1}\pi_{\mathrm{hol}}([F_1,\eta]_v)$:
\begin{equation}\label{equation_Eispart}
-\frac{1}{E_{2v}}E_{2v+1}^{1,\chi_{-4}}(\tau)+\frac{2}{E_{2v}}E_{2v+1}^{1,\chi_{-4}}(2\tau)+\frac{2^{2v}}{E_{2v}}E_{2v+1}^{\chi_{-4},1}(\tau)-\frac{2^{4v+1}}{E_{2v}}E_{2v+1}^{\chi_{-4},1}(2\tau)
\end{equation}
By Lemma \ref{lemma_Eis}, this is precisely the Lambert series appearing in the third line of \eqref{equation_maintheorem}, and the proof of Theorem \ref{theorem_main} is completed.

For $v=1$ and $v=2$, a finite computation of Fourier coefficients confirms that this cusp form vanishes identically. Indeed, by the valence formula, to verify the vanishing of a modular form, it suffices to check only the first few terms of its Fourier expansion (see \cite[Corollary 5.6.14]{cohen2017modular}).

\subsection{Proof of Theorem 2}\label{section_Proof of Theorem 2}
Zwegers' vector valued harmonic Maass form \eqref{equation_trans-H} implies that the mock theta function $\omega(q)$ is indeed the representation of $f(q)$ at the cusp 0. By  the integral representation of the incomplete Gamma function \eqref{equation_incomplete-gamma}, we obtain the Fourier expansion of the second component $H_2(\tau)$ of $H(\tau)$ as follows: 
\begin{equation}
H_2(\tau)=2q^{1/3}\omega(q^{1/2})+\frac{2}{\sqrt{\pi}}\sum_{n\geq 1}\left(\frac{n}{3}\right)(-1)^{n-1}\Gamma\left(\frac{1}{2},\frac{2\pi n^2 y}{3}\right)q^{-\frac{n^2}{6}}.
\end{equation}
Note that while the coefficients of the non-holomorphic part are not a single Dirichlet character, this does not preclude the application of Theorem \ref{theorem_holoproj}. Applying Theorem \ref{theorem_holoproj}, we obtain that
\begin{align*}
24^v\binom{2v}{v}^{-1}\pi_{\mathrm{hol}}([H_2,\eta]_v)&=24^v\binom{2v}{v}^{-1}[2q^{\frac13}\omega(q^{\frac{1}{2}}),\eta]_v\\
&+2\sum_{r\geq 1}\sum_{m^2-4n^2=r}\bigg(\frac{12}{m}\bigg) \bigg(\frac{n}{3}\bigg)(-1)^{n-1}\left(\frac{m-2n}{2}\right)^{2v}q^{\frac{r}{24}}.
\end{align*}
 Lemma \ref{lemma_Hecke-omega} establishes that the Hecke-type $q$-series in the second line of the above expression coincides with the generalized Appell-Lerch type series in Theorem \ref{theorem_main2}.

Applying the slash operator $\big|_{1+2v}S$ to $\pi_{\mathrm{hol}}([H_1,\eta]_v)$ and utilizing Proposition \ref{prop_holoproj-trans}, we conclude that
\[
-\zeta_4\pi_{\mathrm{hol}}([H_2,\eta]_v)=\pi_{\mathrm{hol}}([H_1,\eta]_v)\big|_{2v+1}S
\]
is a modular form in $M_{2v+1}(S^{-1}\Gamma_0(8)S,\left(\frac{-4}{\cdot}\right))=M_{2v+1}(\Gamma^0(8),\left(\frac{-4}{\cdot}\right))$. The Eisenstein part of this modular form can be obtained by applying the slash operator $\big|_{1+2v}S$ to \eqref{equation_Eispart}. By Proposition \ref{proposition_Fricke-invo}, we find that this part is equal to
\[
\frac{1}{2}\left(-\frac{1}{E_{2v}}E_{2v+1}^{\chi_{-4},1}(\tau/4)+\frac{2^{-2v}}{E_{2v}}E_{2v+1}^{\chi_{-4},1}(\tau/8)+\frac{2^{-2v}}{E_{2v}}E_{2v+1}^{1,\chi_{-4}}(\tau/4)-\frac{2^{-2v}}{E_{2v}}E_{2v+1}^{1,\chi_{-4}}(\tau/8)\right).
\]
Finally, by Lemma \ref{lemma_Eis}, these Eisenstein series can be transformed into those Lambert series  stated in Theorem \ref{theorem_main2}, which completes the proof of Theorem \ref{theorem_main2}. 

\section{Some $q$-series lemmas}
In this section, we establish several $q$-series identities that are utilized throughout this paper.
\begin{lemma}\label{lemma_q0}
The following identity holds:
\begin{equation} \label{equation-lemma-q0}
\sum_{n\in \mathbb{Z}} \frac{(-1)^n q^{\frac{3n^2+n}{2}}}{1+q^n} = \sum_{n \in \mathbb{Z}} \left(\frac{12}{2n+1}\right) \frac{q^{\frac{n(n+1)}{6}}}{1+q^n}.
\end{equation}
\end{lemma}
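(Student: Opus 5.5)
The plan is to compare both sides as Hecke-type double sums. First I will rewrite the right-hand side of \eqref{equation-lemma-q0} as two ordinary alternating series. The symbol $\left(\frac{12}{2n+1}\right)$ is nonzero only when $2n+1\equiv \pm1,\pm5 \pmod{12}$, that is, when $n\equiv 0,2,3,5\pmod 6$. Equivalently, $n=3k$ or $n=3k-1$ with $k\in\mathbb{Z}$.

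A direct check of residues gives $\left(\frac{12}{6k+1}\right)=\left(\frac{12}{6k-1}\right)=(-1)^k$. Hence the right-hand side equals
\[
\sum_{k\in\mathbb{Z}}(-1)^k\left(\frac{q^{\frac{k(3k+1)}{2}}}{1+q^{3k}}+\frac{q^{\frac{k(3k-1)}{2}}}{1+q^{3k-1}}\right).
\]
The left-hand side has the same exponents $q^{k(3k+1)/2}$ but denominators $1+q^{k}$. So the task is a redistribution of the geometric-series parts, and I would carry it out in two steps.

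\emph{Step 1 (expansion).} For $n\neq 0$, expand each quotient $q^{a}/(1+q^{m})$ as a signed geometric series:
\begin{itemize}
\item for $m>0$, use $\sum_{j\ge 0}(-1)^j q^{a+mj}$;
\item for $m<0$, use $\sum_{j\ge 1}(-1)^{j-1}q^{a-|m|j}$.
\end{itemize}
The $n=0$ term contributes $\tfrac12$ on both sides. This writes each side as a sum over a cone in $\mathbb{Z}^2$ of $\pm q^{Q(n,j)}$ with an indefinite binary form $Q$:
\begin{itemize}
\item left-hand side: $Q=\tfrac{n(3n+1)}{2}+nj$;
\item right-hand side: $\tfrac{k(3k+1)}{2}+3kj$ and $\tfrac{k(3k-1)}{2}+(3k-1)j$.
\end{itemize}

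\emph{Step 2 (matching).} On the left I split $j$ by its residue modulo $3$, writing $j=3i+r$. The class $r=0$ reproduces exactly the first right-hand family (with $n=k$, $i=j$), including signs and summation cones. It then remains to match the classes $r=1,2$ on the left, whose forms are $\tfrac{n(3n+3)}{2}+3ni$ and $\tfrac{n(3n+5)}{2}+3ni$, with the second right-hand family. For this I will look for a unimodular change of variables $(n,i)\mapsto(k,j)$ carrying these forms to $\tfrac{k(3k-1)}{2}+(3k-1)j$, with the sign $(-1)^{n+j}$ becoming $(-1)^{k+j}$. If a direct lattice bijection is awkward, the fallback is the polynomial identity
\[
\frac{1}{1+x}=\frac{1-x+x^2}{1+x^3}
\]
on the left. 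It isolates the first family immediately and leaves a single identity between Appell--Lerch type sums with denominators $1+q^{3n}$ and $1+q^{3k-1}$, which I would then attack by the same double-sum comparison.

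I expect Step 2 to be the main obstacle. Finding the correct bijection requires care because the summation regions on the two sides are cones with different boundary rays. Consequently, boundary terms (those with $j=0$ or $n=0$) may fail to match directly and must be shown to cancel in pairs, or to telescope against each other. If that bookkeeping becomes unmanageable, I would fall back on a modular argument. Both sides, multiplied by $2/(q;q)_\infty$, should equal $f(q)$: the left-hand side by Watson's identity \eqref{equation_Watson}, and the right-hand side by the level-$1$ ($v=0$) analogue of the holomorphic projection computation in Theorem \ref{theorem_holoproj}. The identity then follows by comparing finitely many coefficients in a space of weight-one forms.
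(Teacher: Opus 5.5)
Your opening reductions are correct and coincide with the paper's. The trisection of the right-hand side into two families $R_1+R_2$ is right, and your signs $(-1)^k$ are correct. The class $j\equiv 0\pmod 3$ on the left does reproduce $R_1$. Your fallback $\frac{1}{1+x}=\frac{1-x+x^2}{1+x^3}$ is exactly the paper's splitting $L=L_1-L_2+L_3$. (A small slip: in Step 1, for $m<0$ the expansion should read $\sum_{j\ge1}(-1)^{j-1}q^{a+|m|j}$, not $q^{a-|m|j}$.)

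\textbf{The gap is Step 2, and it is not a bookkeeping issue.} The class $r=2$, namely $\sum_n(-1)^nq^{n(3n+5)/2}/(1+q^{3n})$, becomes the \emph{first} family again under $n\mapsto -n$. So what actually has to be proved is
\[
R_1-R_2=L_2:=\sum_{n\in\mathbb{Z}}\frac{(-1)^nq^{3n(n+1)/2}}{1+q^{3n}},
\]
where $L_2$ is, up to sign and the $n=0$ term, your class $r=1$.

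No unimodular substitution can do this. Every exponent $\tfrac{3n(n+1)}{2}+3ni$ in the $r=1$ class is divisible by $3$. By contrast, $\tfrac{k(3k-1)}{2}+(3k-1)j\equiv k-j \pmod 3$ takes every residue, so no unimodular change of variables carries one form to the other.

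More decisively, $L_2$ is a product. Take the partial fraction expansion
\[
\sum_n\frac{(-1)^np^{n(n+1)/2}}{1-zp^n}=\frac{(p;p)_\infty^2}{(z;p)_\infty(p/z;p)_\infty}
\]
at $z=-1$, $p=q^3$. This gives $L_2=(q^3;q^3)_\infty^2/\bigl(2(-q^3;q^3)_\infty^2\bigr)$. The residual statement therefore says that a difference of two genuinely mock Appell--Lerch sums equals a theta quotient. That requires real theta-function input, not a relabeling of lattice points with boundary cancellations.

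The paper supplies this input as follows. It writes $R_1=j(q^2;q^3)\,m(-q,q^3,q^2)$ and $R_2=j(q;q^3)\,m(-q,q^3,q)$, and uses $j(q;q^3)=j(q^2;q^3)$. It then applies the Hickerson--Mortenson pole-shifting formula for $m(x,q,z_1)-m(x,q,z_0)$ together with quasi-periodicity of $j$. Your proposal has nothing in its place.

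Your last fallback does not close the gap either, for three reasons.
\begin{enumerate}
\item Theorem~\ref{theorem_holoproj} and the holomorphic projection operator are only set up in weight $2v+1>2$, i.e.\ $v\ge 1$. At weight one there is nothing to apply.
\item The paper's remark that the main theorem is consistent at $v=0$ is itself deduced from Watson's identity together with this very lemma and Lemma~\ref{lemma_Hecke-f}. Appealing to it to prove the lemma is circular.
\item Comparing finitely many coefficients would first require showing that the difference of the two sides is a holomorphic modular form. That means showing the two Appell--Lerch sums have the same nonholomorphic completion, for instance via Zwegers' $\mu$-function, and this is precisely the work the proposal leaves undone.
\end{enumerate}
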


\begin{proof}
Let $L(q)$ and $R(q)$ denote the left- and right-hand sides of \eqref{equation-lemma-q0}, respectively. We evaluate $R(q)$ by trisecting the summation index modulo 3. 
Observe that the Kronecker symbol $\left(\frac{12}{2n+1}\right)$ vanishes for $n \equiv 1 \pmod 3$, reduces to $(-1)^m$ for $n = 3m$, and to $-(-1)^m$ for $n = 3m-1$. Decomposing $R(q)$ over these residue classes yields:
\begin{equation} \label{equation-RHS_trisected}
R(q) := R_1+R_2= \sum_{m \in \mathbb{Z}} \frac{(-1)^m q^{\frac{3m^2+m}{2}}}{1+q^{3m}} + \sum_{m \in \mathbb{Z}} \frac{(-1)^m q^{\frac{3m^2-m}{2}}}{1+q^{3m-1}}.
\end{equation}

For $L(q)$, we multiply the summand by $(1 - q^n + q^{2n})/(1 - q^n + q^{2n})$, which splits the series into three parts: $L(q)=L_1-L_2+L_3$, where $L_1=\sum_{n \in \mathbb{Z}} (-1)^n\frac{ q^{\frac{3n^2+n}{2}}}{1+q^{3n}}$. Note that $L_1$  coincides with $R_1$. For $L_3$, applying the involution $n \mapsto -n$, we find that $L_3=L_1$. Thus, it suffices to show that $R_1-R_2=L_2$.

Following Hickerson and Mortenson \cite{zbMATH06349729}, we define
\[
j(z;q) := (z;q)_\infty (q/z;q)_\infty (q;q)_\infty
\]
and
\begin{equation}\label{equation_m-function}
m(x,q,z) := \frac{1}{j(z;q)} \sum_{n \in \mathbb{Z}} \frac{(-1)^n q^{\binom{n}{2}} z^n}{1-q^{n-1}xz}.
\end{equation}
With this notation, we denote the part to be proven as
\begin{equation}\label{equation_mq-series}
j(q^2;q^3) m(-q, q^3, q^2)-j(q;q^3)m(-q,q^3,q)=j(q^3;q^3) m(-1, q^3, q^3).
\end{equation}
By the partial fraction expansion for the reciprocal of Jacobi’s theta product (see \cite[(1.3)]{zbMATH06349729})
\begin{equation}\label{equation_Jacobi-theta}
\sum_{n\in\mathbb{Z}}\frac{(-1)^nq^{n(n+1)/2}}{1-q^nz}=\frac{J_1^3}{j(z;q)},
\end{equation}
where $J_m:=j(q^{m},q^{3m})$, the right-hand side of \eqref{equation_mq-series} is equal to
\[
\frac{J_3^3}{j(-1;q^3)}.
\] 
For the left-hand side of \eqref{equation_mq-series}, we need the following result (see \cite[Theorem 2.3]{zbMATH06349729}):
\begin{equation} \label{equation_pole_shift}
m(x,q,z_1) - m(x,q,z_0) = \frac{z_0 J_1^3 j(z_1/z_0; q) j(x z_0 z_1; q)}{j(z_0;q) j(z_1;q) j(x z_0; q) j(x z_1; q)}.
\end{equation}
Substituting $q \mapsto q^3$, $x=-q$, $z_1=q^2$, and $z_0=q$ into \eqref{equation_pole_shift}, we obtain
\begin{equation}\label{equation_pole_shift2}
m(-q, q^3, q^2) - m(-q, q^3, q) = \frac{q J_3^3 j(q; q^3) j(-q^4; q^3)}{j(q; q^3) j(q^2; q^3) j(-q^2; q^3) j(-q^3; q^3)}.
\end{equation}
Apply the standard quasi-periodicity $j(q^3 z; q^3) = -z^{-1} j(z; q^3)$ and symmetry $j(z;q^3)=j(q^3/z;q^3)$, the right-hand side of \eqref{equation_pole_shift2} simplifies to:
\begin{equation*}
\frac{J_3^3}{J_1 j(-1; q^3)},
\end{equation*}
and this completes the proof.
\end{proof}

\begin{lemma}\label{lemma_q1}
The following identity holds:
\[
\sum_{n\in\mathbb{Z}}(-1)^nq^{3n(n+1)}\frac{1+q^{2n+1}}{1-q^{2n+1}}=-q^{-\frac{3}{4}}\sum_{n\in 2\mathbb{Z}+1}\left(\frac{12}{n+2}\right)\frac{q^{\frac{n(n+4)}{12}}}{1-q^{n}}
\]
\end{lemma}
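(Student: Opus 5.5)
The plan is to reduce both sides to sums whose denominators are of the form $1-q^{6k+r}$, in the same way as in the proof of Lemma \ref{lemma_q0}, and then to settle the remaining discrepancy with the Hickerson--Mortenson machinery \eqref{equation_m-function}, \eqref{equation_Jacobi-theta}, \eqref{equation_pole_shift}.

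\textbf{Step 1: split the right-hand side by residues of $n$ modulo $6$.} For odd $n$, the symbol $\left(\frac{12}{n+2}\right)$ vanishes when $3\mid n+2$, that is, when $n\equiv 1 \pmod 6$. A direct check gives:
\begin{itemize}
\item for $n=6k+3$ the symbol equals $-(-1)^k$ and $\frac{n(n+4)}{12}=3k^2+5k+\frac74$;
\item for $n=6k+5$ the symbol equals $-(-1)^k$ and $\frac{n(n+4)}{12}=3k^2+7k+\frac{15}{4}$.
\end{itemize}
Hence the right-hand side equals
\[
\sum_{k\in\mathbb{Z}}(-1)^k\frac{q^{3k^2+5k+1}}{1-q^{6k+3}}+\sum_{k\in\mathbb{Z}}(-1)^k\frac{q^{3k^2+7k+3}}{1-q^{6k+5}}.
\]
In the second sum I will substitute $k\mapsto -k-1$ and use $1-q^{-(6k+1)}=-q^{-(6k+1)}(1-q^{6k+1})$. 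This turns it into $\sum_k(-1)^k q^{3k^2+5k}/(1-q^{6k+1})$, so both pieces have denominators with exponents $6k+r$ for $r\in\{1,3\}$.

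\textbf{Step 2: reshape the left-hand side.} I write $\frac{1+x}{1-x}=1+\frac{2x}{1-x}$ with $x=q^{2n+1}$. The term $\sum_n(-1)^nq^{3n(n+1)}$ vanishes, since the involution $n\mapsto -n-1$ reverses the sign of each summand. So the left-hand side equals
\[
2\sum_n(-1)^n\frac{q^{3n^2+5n+1}}{1-q^{2n+1}}.
\]
Next I multiply numerator and denominator by $1+q^{2n+1}+q^{4n+2}$, which produces the denominator $1-q^{6n+3}$. This gives three pieces, exactly as with $L_1,L_2,L_3$ in Lemma \ref{lemma_q0}. One piece should coincide with the $6k+3$ sum from Step 1. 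Another should be matched to it, or to itself, via $n\mapsto -n-1$.

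\textbf{Step 3: the residual identity (main obstacle).} What remains should be an identity between sums with denominators $1-q^{6k+3}$ and $1-q^{6k+1}$. I will rewrite each of these as $j(z;q^6)\,m(x,q^6,z)$ for suitable $x,z$, after possibly replacing $q$ by $q^{1/2}$ so that the quadratic exponents take the form $\binom{n}{2}$. The goal is a relation of the shape
\[
j(z_1;Q)\,m(x,Q,z_1)-j(z_0;Q)\,m(x,Q,z_0)=\text{(theta quotient)},
\]
where only the difference in $z$ matters. That difference is computed by the pole-shift formula \eqref{equation_pole_shift}. The resulting theta quotient is then compared with the value of \eqref{equation_Jacobi-theta} for the remaining single sum. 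The final simplification uses quasi-periodicity $j(Qz;Q)=-z^{-1}j(z;Q)$ and the symmetry $j(z;Q)=j(Q/z;Q)$, as at the end of Lemma \ref{lemma_q0}.

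The delicate part is choosing $x,z_0,z_1$ so that the mismatched prefactors $j(z_i;Q)$ cancel. If they do not, the fallback is to clear denominators and verify the resulting theta-product identity with standard quintuple-product or Weierstrass three-term relations. As a last resort, both sides are holomorphic modular objects on a common group, and a finite coefficient check via the valence formula would settle it.
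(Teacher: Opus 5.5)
\textbf{Gap: the identity as stated is false by a factor of $2$, so Step 3 cannot be completed.} Your Steps 1 and 2 are computed correctly, and your route is the paper's: trisect the right side, put the left side over $1-q^{6n+3}$ using $1+x+x^2$, and finish with \eqref{equation_pole_shift} and \eqref{equation_Jacobi-theta}.

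The left-hand summand is invariant under $n\mapsto -n-1$. So $n=0$ and $n=-1$ each contribute $\frac{1+q}{1-q}$, and the left side is $2+4q+4q^2+\cdots$. The right side is $\frac{1}{1-q}+\frac{q+q^2}{1-q^3}+\frac{q^3}{1-q^5}+O(q^8)=1+2q+2q^2+\cdots$.

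In your notation, let $A$ be the $6k+3$ sum, $B=\sum_k(-1)^kq^{3k^2+5k}/(1-q^{6k+1})$, and $C=\sum_n(-1)^nq^{3n^2+3n}/(1-q^{6n+3})$. The three pieces of your Step 2 are $A$, then $A$ again and $C$ (each of the last two after $n\mapsto -n-1$). So the left side is $2(2A+C)$ and the right side is $A+B$. The residual you would have to prove is $3A+2C=B$, which is false.

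What the pole-shift argument actually gives is $B-A=C$. To see this:
\begin{itemize}
\item write $A=-q^{-1}j(q^2;q^6)\,m(q,q^6,q^2)$;
\item reflect $B$ back to denominators $1-q^{6k-1}$, so that $B=-q^{-1}j(q^2;q^6)\,m(q,q^6,q^4)$ (the prefactors agree because $j(q^4;q^6)=j(q^2;q^6)$, which disposes of your worry about mismatched prefactors);
\item take $z_0=q^2$, $z_1=q^4$ in \eqref{equation_pole_shift}, which gives $B-A=J_6^3/j(q^3;q^6)$;
\item by \eqref{equation_Jacobi-theta} with $q\mapsto q^6$, $z=q^3$, this equals $C$.
\end{itemize}
Hence the left side equals $2(A+B)$, which is twice the right side. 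Your last-resort coefficient check would expose this at the constant term rather than confirm the claim.

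The paper's proof reaches the stated form only through a slip. After writing $\frac{1+x}{1-x}=\frac{2}{1-x}-1$, it records the left side as $\sum_n(-1)^nq^{3n^2+3n}/(1-q^{2n+1})$, dropping the factor $2$. It then proves, by exactly the computation above, that this half-sum equals the right side. You kept the factor $2$ correctly, which is why your plan collides with the statement.

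What your method can actually prove is the corrected identity: either sum over $n\ge 0$ on the left (Watson's original normalization), or put a factor $2$ on the right. You should state that version and make Step 3 explicit as above, instead of leaving the choice of $x,z_0,z_1$ open.
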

\begin{proof}
First, by setting $n=2k+1$, the right-hand side series becomes:
\begin{equation*}
R(q) := -\sum_{k \in \mathbb{Z}} \left(\frac{12}{2k+3}\right) \frac{q^{\frac{k^2+3k-1}{3}}}{1-q^{2k+1}}.
\end{equation*}
Setting $k = 3m+1$ and $k = 3m-1$, we decompose $R(q) = R_1 - R_2$, where:
\[
R_1 = \sum_{m \in \mathbb{Z}} (-1)^m \frac{q^{3m^2+5m+1}}{1-q^{6m+3}}, ~~
R_2 = \sum_{m \in \mathbb{Z}} (-1)^m \frac{q^{3m^2+m-1}}{1-q^{6m-1}}.
\]

Using $\frac{1+q^{2n+1}}{1-q^{2n+1}} = \frac{2}{1-q^{2n+1}} - 1$ and  $\sum_{n \in \mathbb{Z}} (-1)^n q^{3n(n+1)} = 0$, we find that the left-hand side series becomes
\begin{equation*}
L(q) = \sum_{n \in \mathbb{Z}} \frac{(-1)^n q^{3n^2+3n}}{1-q^{2n+1}}.
\end{equation*}
We express the denominator as $(1+q^{2n+1}+q^{4n+2})/(1-q^{6n+3})$, which splits $L(q)$ into three sums: $L(q) = L_1 + L_2 + L_3$, where $L_1=\sum_{n\in\mathbb{Z}}(-1)^n\frac{q^{3n^2+3n}}{1-q^{6n+3}}$.
Notice $L_2$ identically matches $R_2$. Applying the involution $n \mapsto -n-1$ to $L_3$ precisely yields $L_2$. Thus, $L(q) = L_1 + 2R_1$. It then suffices to show $L_1 + R_1 + R_2 = 0$.

Using the $m$-function introduced in \eqref{equation_m-function}, we express $R_1$ and $R_2$ via the standard $m(x,q,z)$ function:
\begin{align*}
R_1 &= -q^{-1} j(q^2; q^6) m(q, q^6, q^2), \\
R_2 &= q^{-1} j(q^4; q^6) m(q, q^6, q^4) = q^{-1} j(q^2; q^6) m(q, q^6, q^4).
\end{align*}
Applying the pole-shifting identity \eqref{equation_pole_shift} for $z_1=q^4, z_0=q^2$:
\begin{equation*}
m(q, q^6, q^4) - m(q, q^6, q^2) = \frac{q^2 J_6^3 j(q^2; q^6) j(q^7; q^6)}{j(q^2; q^6) j(q^4; q^6) j(q^3; q^6) j(q^5; q^6)} = \frac{-q J_6^3}{j(q^2; q^6) j(q^3; q^6)}.
\end{equation*}
Substituting this back, we obtain
\begin{equation*}
R_1 + R_2 = q^{-1} j(q^2; q^6) \left[ \frac{-q J_6^3}{j(q^2; q^6) j(q^3; q^6)} \right] = -\frac{J_6^3}{j(q^3; q^6)} = -\frac{J_6^4}{J_3^2}.
\end{equation*}

Finally, by Jacobi’s theta product \eqref{equation_Jacobi-theta}, we evaluate $L_1$ using the standard Lambert series evaluation:
\begin{equation*}
L_1 = \sum_{n \in \mathbb{Z}} \frac{(-1)^n q^{3n^2+3n}}{1-q^{6n+3}} = \frac{J_6^4}{J_3^2}.
\end{equation*}
Therefore, $R_1 + R_2 + L_1 = \frac{J_6^4}{J_3^2} - \frac{J_6^4}{J_3^2} = 0$, completing the proof.
\end{proof}

\begin{lemma}\label{lemma_Hecke-f}
For any integer $v \ge 0$, the following identity holds:
\[
\sum_{n \in \mathbb{Z}} \left(\frac{12}{2n+1}\right) \frac{n^{2v} q^{n(n+1)/6}}{1+q^n} = \sum_{m > n > 0} \left(\frac{12}{m}\right)\left(\frac{-12}{n}\right) \left(\frac{m-n}{2}\right)^{2v} q^{\frac{m^2-n^2}{24}}.
\]
\end{lemma}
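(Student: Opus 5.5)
The plan is to expand each Appell--Lerch term as a geometric series and then match the resulting double sum term by term with the Hecke-type sum on the right. This uses the substitution $l=6j\pm1$, $m=l+2|n|$.

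\emph{Step 1: expansion.} For $|q|<1$ everything converges absolutely, so sums may be rearranged freely. I split the left-hand side into $n>0$, $n=0$ and $n<0$. For $n>0$ I use
\[
\frac{1}{1+q^n}=\sum_{j\ge0}(-1)^jq^{nj}.
\]
For $n=-n'<0$ I use
\[
\frac{1}{1+q^{-n'}}=\sum_{j\ge1}(-1)^{j-1}q^{n'j}.
\]
The $n=0$ term vanishes for $v\ge1$ because of the factor $n^{2v}$. For $v=0$ it contributes a constant $\tfrac12$, which has no counterpart on the right; I would add a remark that the identity is meant up to this constant for $v=0$ (equivalently, it is needed only for $v\ge1$). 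Since $\left(\frac{12}{2n+1}\right)$ is even in the variable $2n+1$, the $n<0$ part becomes a sum over $n'>0$, $j\ge1$ of
\[
(-1)^{j-1}\Big(\tfrac{12}{2n'-1}\Big)n'^{2v}q^{\frac{n'(n'-1)}6+n'j}.
\]

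\emph{Step 2: the substitution.} On the right-hand side, $\left(\frac{12}{m}\right)\left(\frac{-12}{l}\right)\neq0$ forces $m$ and $l$ to be coprime to $6$, so $m-l$ is even. I write $N=(m-l)/2>0$, so that $m^2-l^2=4N^2+4Nl$.

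\begin{itemize}
\item If $l=6j+1$ with $j\ge0$, then $\left(\frac{-12}{l}\right)=1$. Setting $n=N$, the exponent is $(4n^2+4n+24nj)/24=\frac{n(n+1)}6+nj$, and $m=2n+1+6j$.
\item If $l=6j-1$ with $j\ge1$, then $\left(\frac{-12}{l}\right)=-1$. Setting $n'=N$, the exponent is $\frac{n'(n'-1)}6+n'j$, and $m=2n'-1+6j$.
\end{itemize}

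In both cases the power $\big(\tfrac{m-l}{2}\big)^{2v}$ equals $n^{2v}$, respectively $n'^{2v}$. The map $(m,l)\mapsto(N,j,\pm)$ is a bijection between pairs $m>l>0$ with $l$ coprime to $6$ and the index sets of Step 1.

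\emph{Step 3: characters.} For $x$ coprime to $6$, the character $\left(\frac{12}{\cdot}\right)$ changes sign under $x\mapsto x+6$ (check $1\mapsto7$, $5\mapsto11$). Hence $\left(\frac{12}{x+6j}\right)=(-1)^j\left(\frac{12}{x}\right)$.

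\begin{itemize}
\item In the first case this gives $\left(\frac{12}{m}\right)\left(\frac{-12}{l}\right)=(-1)^j\left(\frac{12}{2n+1}\right)$, which is exactly the coefficient from Step 1.
\item In the second case it gives $-(-1)^j\left(\frac{12}{2n'-1}\right)=(-1)^{j-1}\left(\frac{12}{2n'-1}\right)$, which also matches.
\end{itemize}

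Terms with $3\mid 2n\pm1$ vanish on both sides, so nothing is lost.

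I expect the only delicate points to be the bookkeeping of signs and residue classes modulo $12$ in Step 3, and the $n=0$ and $v=0$ constant. The rest is a direct reindexing.
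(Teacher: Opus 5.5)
Your proof is correct and follows the paper's argument essentially step for step. You expand $1/(1+q^{n})$ geometrically, separately for $n>0$ and $n<0$, reindex via $m=2n+6j+1$, $l=6j+1$ (resp.\ $m=2n'+6j-1$, $l=6j-1$), and match coefficients using $\left(\frac{12}{x+6j}\right)=(-1)^j\left(\frac{12}{x}\right)$ and $\left(\frac{-12}{l}\right)=\pm1$ for $l\equiv\pm1\pmod 6$. Your treatment of the $n=0$ term is also right, and it is a point the paper's proof skips: for $v=0$ (with $0^0=1$) the left side carries an extra constant $\tfrac12$, so the identity as stated holds only for $v\ge1$, which is the only case used in the proof of Theorem~\ref{theorem_main}.
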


\begin{proof}
Let $L(q)$ denote the left-hand side of the equation. We separate $L(q)$ into positive and negative index domains: $L(q) = L_+ + L_-$. For $n > 0$, we expand the denominator as a geometric series $\frac{1}{1+q^n} = \sum_{j \ge 0} (-1)^j q^{jn}$, yielding:
\[
L_+ = \sum_{n \ge 1} \sum_{j \ge 0} (-1)^j \left(\frac{12}{2n+1}\right) n^{2v} q^{\frac{n(n+1+6j)}{6}}.
\]
We introduce a diagonalization of the quadratic form by setting $m = 2n + 6j + 1$ and $n' = 6j + 1$. Since $n \ge 1$ and $j \ge 0$, this maps bijectively to the domain $m > n' > 0$ with $n' \equiv 1 \pmod 6$. Under this substitution, we have $n = \frac{m-n'}{2}$ and the exponent becomes $\frac{m^2 - (n')^2}{24}$.
Furthermore, $n' \equiv 1 \pmod 6$ implies $\left(\frac{-12}{n'}\right) = 1$. The periodicity of the Kronecker symbol gives $\left(\frac{12}{m}\right) = \left(\frac{12}{2n+1+6j}\right) = (-1)^j \left(\frac{12}{2n+1}\right)$. Substituting these into $L_+$ yields:
\[
L_+ = \sum_{\substack{m > n' > 0 \\ n' \equiv 1 \pmod 6}} \left(\frac{12}{m}\right) \left(\frac{-12}{n'}\right) \left(\frac{m-n'}{2}\right)^{2v} q^{\frac{m^2-(n')^2}{24}}.
\]

For $n < 0$, we substitute $n = -u$ with $u > 0$. Similarly, the sum becomes
\[
L_- = \sum_{u \ge 1} \sum_{j \ge 1} (-1)^{j-1} \left(\frac{12}{2u-1}\right) u^{2v} q^{\frac{u(u-1+6j)}{6}}.
\]
We apply the substitution $m = 2u + 6j - 1$ and $n' = 6j - 1$. For $u, j \ge 1$, this maps to $m > n' > 0$ with $n' \equiv 5 \pmod 6$. 
Here, $n' \equiv 5 \pmod 6$ dictates $\left(\frac{-12}{n'}\right) = -1$. Thus, the negative index sum exactly generates the $n' \equiv 5 \pmod 6$ congruences:
\[
L_- = \sum_{\substack{m > n' > 0 \\ n' \equiv 5 \pmod 6}} \left(\frac{12}{m}\right) \left(\frac{-12}{n'}\right) \left(\frac{m-n'}{2}\right)^{2v} q^{\frac{m^2-(n')^2}{24}}.
\]
Since $\left(\frac{-12}{n'}\right)$ is supported entirely on $\gcd(n', 6) = 1$, the sum $L_+ + L_-$ perfectly reconstitutes the Hecke indefinite theta series over all valid coprime residue classes.
\end{proof}

\begin{lemma}\label{lemma_Hecke-omega}
For every integer $v\geq 0$, the following identity holds:
\[
\sum_{n\in \mathbb{Z}}\left(\frac{12}{n+2}\right)\frac{n^{2v}q^{n(n+4)/24}}{1-q^{n/2}}=\sum_{m>n>0}\bigg(\frac{12}{m}\bigg) \bigg(\frac{n}{3}\bigg)(-1)^{n-1}\left(m-2n\right)^{2v}q^{\frac{m^2-4n^2}{24}}.
\]
\end{lemma}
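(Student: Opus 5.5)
The plan is to follow the proof of Lemma \ref{lemma_Hecke-f}: split the left-hand side by the sign of the summation index, expand the denominator as a geometric series, and reparametrize the resulting double sums as a sum over pairs $(m,N)$ with $m^2-4N^2>0$. Write $L(q)=L_++L_-$ according to $n>0$ and $n<0$. The $n=0$ term vanishes because $\left(\frac{12}{2}\right)=0$. Only odd $n$ with $3\nmid n+2$ contribute.

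\emph{Case $n>0$.} Expand $\frac{1}{1-q^{n/2}}=\sum_{j\ge0}q^{jn/2}$, which gives exponent $\frac{n(n+4+12j)}{24}$. Set $N=3j+1$ and $m=n+2N$. Then $m-2N=n$ and $m^2-4N^2=n(n+4N)=n(n+4+12j)$. The weight becomes $(m-2N)^{2v}=n^{2v}$, and $\left(\frac N3\right)(-1)^{N-1}=(-1)^{3j}=(-1)^j$. Since $m=(n+2)+6j$ and $\left(\frac{12}{x+6}\right)=-\left(\frac{12}{x}\right)$ for $x$ odd, we get $\left(\frac{12}{m}\right)=(-1)^j\left(\frac{12}{n+2}\right)$. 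The geometric series has no sign, so the two factors $(-1)^j$ cancel and the summand of $L_+$ is exactly $\left(\frac{12}{m}\right)\left(\frac N3\right)(-1)^{N-1}(m-2N)^{2v}q^{(m^2-4N^2)/24}$. The map $(n,j)\mapsto(m,N)$ is a bijection onto the pairs with $N\equiv1\pmod 3$ and $m>2N$.

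\emph{Case $n<0$.} Put $n=-u$ and use $\frac{1}{1-q^{-u/2}}=-\sum_{j\ge1}q^{ju/2}$, which gives exponent $\frac{u(u-4+12j)}{24}$. Set $N=3j-1$ and $m=u+2N$. Then $m^2-4N^2=u(u+4N)$ matches this exponent, and $(m-2N)^{2v}=u^{2v}=n^{2v}$. Here $\left(\frac N3\right)(-1)^{N-1}=-(-1)^j$, and this minus sign absorbs the minus sign from the geometric expansion. Because $\left(\frac{12}{\cdot}\right)$ is even, $\left(\frac{12}{-u+2}\right)=\left(\frac{12}{u-2}\right)$, and $m=(u-2)+6j$, so the character again contributes $(-1)^j$. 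Thus $L_-$ is the sum over pairs with $N\equiv2\pmod3$ and $m>2N$. Terms with $3\mid N$ vanish on the right because of $\left(\frac N3\right)$, and the left-side terms with $\left(\frac{12}{n+2}\right)=0$ correspond to $\left(\frac{12}{m}\right)=0$. Adding the two cases gives the right-hand side.

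\emph{Main obstacle: the summation range.} The reparametrization naturally produces the range $m>2N>0$, equivalently $r=m^2-4N^2\ge1$. This is exactly the range $\sum_{r\ge1}\sum_{m^2-4n^2=r}$ that arises from Theorem \ref{theorem_holoproj} in the proof of Theorem \ref{theorem_main2}. The literal condition $m>n>0$ in the statement would also admit pairs with $N<m<2N$, which carry negative exponents. I would therefore read the right-hand side as running over $m>2n>0$, note this explicitly, and check it against the application. Apart from this, the work is bookkeeping: the sign identities $\left(\frac{12}{x+6}\right)=-\left(\frac{12}{x}\right)$ and $\left(\frac{N}{3}\right)(-1)^{N-1}=\pm(-1)^j$ for $N=3j\pm1$, and the check that both substitutions are bijective onto their residue classes.
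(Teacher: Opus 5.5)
Your proposal is correct and is essentially the paper's argument: the same split by the sign of $n$, the same geometric expansions, and the same substitutions $m=n+6j+2$, $N=3j+1$ and $m=u+6j-2$, $N=3j-1$, with the character and sign bookkeeping written out more explicitly than in the paper. Your remark about the range is also right. These substitutions give $m-2N=|n|>0$, so the right-hand side really runs over $m>2n>0$, i.e.\ $r=m^2-4n^2\geq 1$ as in the proof of Theorem~\ref{theorem_main2}. The literal condition $m>n>0$ would add nonvanishing terms with negative exponents; for example $(m,n)=(5,4)$ contributes $9^v q^{-39/24}$.
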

\begin{proof}
The proof is entirely analogous to that of Lemma \ref{lemma_Hecke-f}. We establish the identity by expanding the denominator geometrically and splitting the summation into positive and negative domains of $n$. For $n > 0$, expanding $(1 - q^{n/2})^{-1} = \sum_{j \ge 0} q^{jn/2}$ and applying the substitution $m = n + 6j + 2$ and $n' = 3j + 1$ bijectively generates the terms of the Hecke double sum restricted to the residue class $n' \equiv 1 \pmod 3$. For $n < 0$, we set $n = -u$ and expand $(1 - q^{-u/2})^{-1} = -\sum_{j \ge 1} q^{ju/2}$. The substitution $m = u + 6j - 2$ and $n' = 3j - 1$ similarly recovers exactly the terms corresponding to $n' \equiv -1 \pmod 3$. 

One can straightforwardly verify that the quadratic forms factorize as $m^2 - 4n'^2 = n(n+4)$  in both domains. The remaining character evaluations follow from the periodicity of the Kronecker symbol exactly as demonstrated in Lemma \ref{lemma_Hecke-f}.
\end{proof}

\begin{lemma}\label{lemma_Eis}
For every $v\geq 0$, the following identities hold:
\begin{align*}
&E_{2v+1}^{1,\chi_{-4}}(\tau)-2E_{2v+1}^{1,\chi_{-4}}(2\tau)+\frac{E_{2v}}{2}=2\sum_{n\geq 1}\frac{(-1)^{n-1}(2n-1)^{2v}q^{2n-1}}{1+q^{2n-1}},\\
&E_{2v+1}^{1,\chi_{-4}}(\tau)-E_{2v+1}^{1,\chi_{-4}}(2\tau)=2\sum_{n\geq 1}\frac{(-1)^{n-1}(2n-1)^{2v}q^{2n-1}}{1-q^{4n-2}},\\
&E_{2v+1}^{\chi_{-4},1}(\tau)-2^{2v+1}E_{2v+1}^{\chi_{-4},1}(2\tau)=2\sum_{n\geq 1}\frac{(-1)^{n-1}n^{2v}q^n}{1+q^n},\\
&E_{2v+1}^{\chi_{-4},1}(\tau)-2^{2v}E_{2v+1}^{\chi_{-4},1}(2\tau)=2\sum_{n\geq 1}\frac{(2n-1)^{2v}q^{2n-1}}{1+q^{4n-2}}.
\end{align*}
\end{lemma}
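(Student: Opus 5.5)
The plan is to prove all four identities by comparing $q$-expansion coefficients directly. Every object involved is an explicit $q$-series, so no modularity is needed: the definition of $E_k^{\psi,\phi}$ gives closed forms for the left-hand sides, and each Lambert series on the right can be expanded geometrically.

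\emph{The Eisenstein side.} With $k=2v+1$ and $\chi=\chi_{-4}$, the definition gives
\[
E_{k}^{1,\chi}(\tau)=\tfrac{E_{2v}}{2}+2\sum_{N\ge1}\Big(\sum_{d\mid N}\chi(d)d^{2v}\Big)q^N.
\]
Here the constant term uses $\delta(1)=1$ and $L(1-k,\chi)=E_{k-1}/2$. Similarly,
\[
E_k^{\chi,1}(\tau)=2\sum_{N\ge1}\Big(\sum_{d\mid N}\chi(N/d)d^{2v}\Big)q^N,
\]
with zero constant term since $\delta(\chi)=0$. Replacing $\tau$ by $2\tau$ restricts to even $N$. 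After the substitution $e=2d$, the factor $2^{2v}$ or $2^{2v+1}$ turns a sum over divisors of $N/2$ into a sum over even divisors $e$ of $N$.

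\emph{The Lambert side.} Expand each term geometrically:
\[
\frac{q^d}{1+q^d}=\sum_{j\ge1}(-1)^{j-1}q^{dj},\qquad \frac{q^d}{1-q^{2d}}=\sum_{j\ \mathrm{odd}}q^{dj},\qquad \frac{q^d}{1+q^{2d}}=\sum_{j\ge1}\chi(j)q^{dj}.
\]
Also note that $(-1)^{n-1}=\chi(2n-1)$. The coefficient of $q^N$ then becomes a divisor sum, and each identity reduces to a parity bookkeeping in $N/d$ or $d$.

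\emph{Identity 1.} The right side has coefficient $2\sum_{d\mid N}\chi(d)d^{2v}(-1)^{N/d-1}$. Writing $(-1)^{N/d-1}=1-2[\,N/d \text{ even}\,]$ reproduces $2\sigma(N)-4\sigma(N/2)$. For the constant term, $E_{2v}/2-2\cdot E_{2v}/2$ is cancelled by the added $E_{2v}/2$.

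\emph{Identity 2.} Only the divisors $d$ with $N/d$ odd survive on both sides, which gives $E(\tau)-E(2\tau)$.

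\emph{Identity 4.} The left side equals $2\sum_{d\mid N}\chi(N/d)d^{2v}-2\sum_{e\mid N,\,e\ \mathrm{even}}\chi(N/e)e^{2v}$. This is the sum over odd $d$, matching $\sum_{d \text{ odd}}d^{2v}\chi(N/d)$ on the right.

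\emph{Identity 3 (the main obstacle).} The left side is $2\sum_{d\mid N}\chi(N/d)d^{2v}\big(1-2[d\text{ even}]\big)=2\sum_{d\mid N}(-1)^{d-1}\chi(N/d)d^{2v}$. This is exactly the coefficient of $2\sum_n(-1)^{n-1}n^{2v}q^n/(1+q^{2n})$, which is the series actually used in Theorem~\ref{theorem_main}. With the denominator $1+q^n$ as printed, a check at $v=0$, $N=3$ gives $2$ on the right versus $0$ on the left. So I would prove the third identity in the form
\[
E_{2v+1}^{\chi_{-4},1}(\tau)-2^{2v+1}E_{2v+1}^{\chi_{-4},1}(2\tau)=2\sum_{n\geq 1}\frac{(-1)^{n-1}n^{2v}q^n}{1+q^{2n}},
\]
and regard $1+q^n$ as a misprint. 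Beyond this, I expect no further difficulty; only constant terms and parity need care.
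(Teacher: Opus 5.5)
Your argument is correct and is essentially the paper's: the paper rewrites each Eisenstein series as a Lambert series and combines terms, e.g.\ $\frac{x}{1-x}-\frac{2x^2}{1-x^2}=\frac{x}{1+x}$ with $x=q^{2n-1}$, which is the same parity bookkeeping you carry out on the divisor-sum coefficients. You are also right that the third identity is misprinted; the paper leaves it to the reader, so it never checks it. The correct denominator is $1+q^{2n}$, which is the form used in Theorem~\ref{theorem_main}, and your $N=3$ check confirms this (keeping the overall factor $2$, the right side is $4$ rather than $2$, against $0$ on the left, so your conclusion stands).
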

\begin{proof}
It is well-known that
\[
\sum_{n\geq 1}\sum_{\substack{d\mid n\\d>0}}\chi_{-4}(d)d^{s}q^n=\sum_{n\geq 1}\frac{(-1)^{n-1}(2n-1)^sq^{2n-1}}{1-q^{2n-1}}
\]
and
\[
\sum_{n\geq 1}\sum_{\substack{d\mid n\\d>0}}\chi_{-4}(n/d)d^{s}q^n=\sum_{n\geq 1}\frac{n^sq^{n}}{1+q^{2n}}.
\]
The result follows from elementary $q$-series analysis. For instance, the first identity  is equivalent to
\begin{align*}
&\frac{(-1)^{n-1}(2n-1)^sq^{2n-1}}{1-q^{2n-1}}-\frac{2(-1)^{n-1}(2n-1)^sq^{4n-2}}{1-q^{4n-2}}\\
&=\frac{(-1)^{n-1}(2n-1)^sq^{2n-1}(1+q^{2n-1})}{(1-q^{2n-1})(1+q^{2n-1})}-\frac{2(-1)^{n-1}(2n-1)^sq^{4n-2}}{1-q^{4n-2}}\\
&=\frac{(-1)^{n-1}(2n-1)^{2v}q^{2n-1}}{1+q^{2n-1}}
\end{align*}
The proofs of the remaining identities are left to the reader.
\end{proof}

\providecommand{\bysame}{\leavevmode\hbox to3em{\hrulefill}\thinspace}
\providecommand{\MR}{\relax\ifhmode\unskip\space\fi MR }
\providecommand{\MRhref}[2]{%
  \href{http://www.ams.org/mathscinet-getitem?mr=#1}{#2}
}
\providecommand{\href}[2]{#2}

\end{document}